\documentclass[11pt,a4paper]{article}
\usepackage[T1]{fontenc}
\usepackage{newpxtext}
\usepackage{amsmath,amsthm,mathtools}
\usepackage{newpxmath}
\usepackage[margin=27mm,headheight=15pt,headsep=8mm,footskip=12mm]{geometry}
\usepackage{microtype,booktabs,array,needspace,fancyhdr}
\usepackage{xcolor}
\definecolor{linkink}{RGB}{25,67,86}
\usepackage[colorlinks=true,linkcolor=linkink,citecolor=linkink,urlcolor=linkink,
  bookmarksnumbered=true,pdfencoding=auto,psdextra]{hyperref}
\hypersetup{
  pdftitle={Sudakov minoration for weighted Orlicz balls: a readable proof},
  pdfauthor={Witold Bednorz, Rafal Martynek and Rafal Meller},
  pdfsubject={Historical introduction, monotone budget coupling, product comparison and residual absorption}}
\fancypagestyle{plain}{\fancyhf{}\fancyfoot[C]{\small\thepage}}
\numberwithin{equation}{section}
\allowdisplaybreaks[1]
\newtheorem{theorem}{Theorem}[section]
\newtheorem{lemma}[theorem]{Lemma}
\newtheorem{proposition}[theorem]{Proposition}
\newtheorem{corollary}[theorem]{Corollary}
\theoremstyle{definition}
\newtheorem{definition}[theorem]{Definition}
\theoremstyle{remark}
\newtheorem{remark}[theorem]{Remark}
\newcommand{\E}{\mathbb E}
\newcommand{\PP}{\mathbb P}
\newcommand{\R}{\mathbb R}
\newcommand{\ind}{\mathbf 1}
\newcommand{\cx}{\preceq_{\mathrm{cx}}}
\newcommand{\st}{\preceq_{\mathrm{st}}}
\newcommand{\Law}{\mathcal L}
\newcommand{\clip}{\operatorname{clip}}
\newcommand{\ip}[2]{\langle #1,#2\rangle}
\newcommand{\norm}[1]{\lVert #1\rVert}
\newcommand{\SMP}{\operatorname{SMP}}
\newcommand{\eps}{\varepsilon}
\newenvironment{keypoint}{\par\smallskip\begin{quote}\small\noindent\textbf{Key point.}\ }{\end{quote}\smallskip}
\newcommand{\proofstep}[1]{\par\smallskip\noindent\textbf{#1}\ }

\title{\textbf{Sudakov minoration for\ weighted Orlicz balls}\\[7pt]
  \large A readable proof with historical background\\[4pt]
  \normalsize Monotone budget coupling and local residual absorption}
\author{Witold Bednorz, Rafa\l{} Martynek and Rafa\l{} Meller\thanks{Research partially supported by Grant UMO-2022/47/B/ST1/02114.}\\[4pt]
  \small Institute of Mathematics, University of Warsaw\\[-2pt]
  \small Banacha 2, 02-097 Warszawa, Poland}
\date{}

\begin{document}
\maketitle
\begin{center}
\small\textbf{AI was used in this research.}
\end{center}
\begin{abstract}
We explain a dimension-free Sudakov minoration principle for the probability
measures with densities proportional to
\[
 e^{-\lambda\sum_i|x_i|}
 \ind_{\{\sum_i\phi_i(|x_i|)\le E\}},
\]
where the functions $\phi_i$ are finite, convex and strictly increasing.
The central idea is to compare this dependent distribution with a suitably
chosen product distribution. An auxiliary exponential variable makes the
product distribution an exact mixture over energy budgets. We couple these
budgets monotonically, correct the product vector into the desired
constraint, and show that the correction is small in every expected
seminorm. A comparison in the opposite direction preserves the moment
separation needed for Sudakov minoration.

The exposition includes the historical setting, a proof roadmap, detailed
coupling and calibration arguments, and explicit examples. No isotropic
normalization or uniform upper-growth assumption on the $\phi_i$ is needed.
The result concerns a single additive constraint; it does not assert the
same conclusion for all unconditional log-concave vectors.
\end{abstract}
\vspace{3pt}
\noindent\small\textbf{Mathematics Subject Classification:} 60G15, 60G17.\\
\textbf{Keywords:} Sudakov minoration; generalized Orlicz balls; log-concave
measures; stochastic order; convex order; coupling.\normalsize

\clearpage
\tableofcontents
\vspace{9pt}
\noindent\textbf{How to read the argument.}
Section~\ref{sec:history} gives the historical context.
Sections~\ref{sec:setting}--\ref{sec:transfer} state the result and explain
exactly which comparisons are sufficient. The construction itself is in
Sections~\ref{sec:budget}--\ref{sec:reverse};
Section~\ref{sec:completion} completes the proof.
Sections~\ref{sec:examples}--\ref{sec:scope} provide examples and clarify
what the estimates do, and do not, say. The independent-coordinate
minoration theorem is used as an external input, not reproved here.

\medskip
\noindent\textbf{The main comparison, in one line.}
The whole construction produces one product vector $V$, independent of
the index set under consideration, such that
\[
 \boxed{\quad X\cx 2MV,\qquad
       \E q(V)\le 2\E q(X)\quad\text{for every seminorm }q.\quad}
\]
The first inequality transfers separation to $V$; the second transfers a
lower bound for the expected supremum back to $X$.

\clearpage
\section{Historical introduction and perspective}\label{sec:history}

\subsection{From Gaussian separation to moment separation}
The original Sudakov minoration belongs to the geometric theory of Gaussian
processes developed in the late 1960s and early 1970s. Sudakov's work on
Gaussian measures and entropy \cite{Sudakov69}, followed by his geometric
study of Gaussian processes \cite{Sudakov}, provides the early background.
Its message is that a Gaussian process
cannot remain uniformly small on a large collection of well-separated
points. For a finite centered Gaussian family $(G_t)_{t\in T}$, put
\[
 d_G(s,t)=\bigl(\E|G_s-G_t|^2\bigr)^{1/2}.
\]
If $d_G(s,t)\ge r$ whenever $s\ne t$, then
\begin{equation}\label{eq:gaussian}
 \E\max_{t\in T}G_t\ge c\,r\sqrt{\log|T|}.
\end{equation}
The factor $\sqrt{\log|T|}$ is the gain obtained from having many separated
choices rather than just one.

To pass beyond Gaussian processes, it is useful to express separation in
terms of moments. If $g$ is a standard normal variable, then
$\norm{g}_p$ is comparable to $\sqrt p$ for $p\ge1$. Thus
\[
 \norm{G_t-G_s}_p\asymp \sqrt p\,d_G(s,t).
\]
Consequently, if $|T|\ge e^p$ and every increment has $L_p$ norm at least $a$,
\eqref{eq:gaussian} gives $\E\max_tG_t\ge c'a$.
This is the form that survives for non-Gaussian canonical processes.

The scale $p\simeq\log|T|$ is also natural from the elementary upper bound
\[
 \E\max_{t\in T}|Z_t|
 \le\Bigl(\sum_{t\in T}\E|Z_t|^p\Bigr)^{1/p}
 \le e\max_{t\in T}\norm{Z_t}_p
 \qquad (|T|\le e^p).
\]
Sudakov minoration asks for a converse under \emph{increment separation}.
It is not a consequence of this upper bound: the dependence among the
variables is precisely what makes a converse difficult.

\subsection{Independent coordinates and Bernoulli processes}
A canonical process has the form
\[
 Z_t=\sum_{i=1}^d t_iZ_i,\qquad t\in T\subset\R^d.
\]
An important part of the broader development was Talagrand's work on
infinitely divisible processes \cite{Talagrand93}: it extended
majorizing-measure lower bounds beyond the Gaussian setting and emphasized
the role of positive processes. For the independent-coordinate problem,
Lata\l a's 1997 paper \cite{Latala97} proved a universal Sudakov minoration
for independent symmetric coordinates with log-concave tails. The later
work of Lata\l a and Tkocz \cite{LT} treated independent coordinates with
regularly growing moments. These results supply the independent reference
case needed below. Their hypotheses do not require identical marginal
distributions.

The Bernoulli theory forms another part of this background. For independent
signs, a single Gaussian metric does not describe the entire supremum
problem. Bednorz and Lata\l a's characterization of bounded Bernoulli
processes \cite{BL} identifies a decomposition into a Gaussian contribution
and a contribution controlled by an $\ell_1$ bound. The present argument
uses only sign contraction and independent-coordinate minoration; it does
not use that characterization as a black box.

\subsection{The dependent log-concave setting}
Lata\l a \cite{Latala14} formulated a general Sudakov-type conjecture for
log-concave vectors and proved several special cases. In particular,
Theorem~5.2 and Corollary~5.3 of that paper give a radial transfer principle
and cover uniform measures on ordinary $\ell_r$ balls. Bednorz
\cite{Bednorz14} developed related reductions for dependent unconditional
vectors, including methods based on common witnesses.

The present family is naturally connected with that program. The
coordinates need not be independent: the constraint
$\sum_i\phi_i(|x_i|)\le E$ forces them to share a common energy budget.
Furthermore, for general choices of the $\phi_i$, the sublevel sets are not
scalar dilates of one fixed convex body. A radial representation therefore
does not directly supply the comparison used here.

\subsection{Orlicz balls, negative association, and product representations}
Two strands of work help explain why generalized Orlicz balls are a useful
test case. Pilipczuk and Wojtaszczyk \cite{PW} proved that the absolute
coordinates of a vector uniformly distributed on a generalized Orlicz ball
are negatively associated. This says, roughly, that increasing functions
of disjoint coordinate groups tend to be negatively correlated. It
captures the competition created by a shared constraint, but is not the
property used in the proof below.

A different strand seeks representations by independent random variables.
Barthe, Gu\'edon, Mendelson and Naor \cite{BGMN} represented the uniform
measure on an $\ell_r$ ball using independent generalized Gaussian
coordinates and an auxiliary exponential variable. For general Orlicz
balls, Kabluchko and Prochno \cite{KP} used the maximum-entropy viewpoint
and associated Gibbs distributions to study volume asymptotics.

These constructions motivate the product density
\[
 \prod_i c_{i,\tau}
       e^{-\lambda|v_i|-\tau\phi_i(|v_i|)}.
\]
Here, however, the required comparison is finite-dimensional and uniform:
we do not appeal to asymptotic equivalence of ensembles. An exponential
slack variable yields an \emph{exact} disintegration over the energy
budget. That identity is proved directly in
Section~\ref{sec:reference}.

\subsection{The particular mechanism of this proof}
The argument developed in the supplied manuscript \cite{BMM} replaces a
fixed radial scaling by a sequential coupling of budgets. The crucial
feature is not merely that a larger budget gives larger coordinates. Each
coordinate's increase in energy is bounded by the extra budget available
at that stage. This prevents the sequential construction from exhausting
more than the available difference.

There are then two opposing requirements. The independent reference must
be small enough that it rarely needs correction, but not so small that it
loses the moment separation of the original vector. We meet the first
requirement by increasing the exponential-tilt parameter, and the second
by a convex-order comparison after a universal dilation.

\begin{keypoint}
The product model is used for its minoration theorem. The budget coupling
is used to compare that model with the constrained vector. The small
correction is measured in expected seminorms, exactly the quantities
needed to compare widths.
\end{keypoint}
This historical discussion locates the method among earlier results; it
is not a claim of priority for every consequence or special case.

\section{The setting and the two principal theorems}\label{sec:setting}

\subsection{The family of measures}
Fix $d\ge1$, an energy budget $E>0$, and a parameter $\lambda\ge0$.
For each $i=1,\ldots,d$, assume that
\begin{equation}\label{eq:assumptions}
 \begin{gathered}
 \phi_i:[0,\infty)\longrightarrow[0,\infty)
       \text{ is finite, convex, and strictly increasing},\\
 \phi_i(0)=0,\qquad \lim_{r\to\infty}\phi_i(r)=\infty.
 \end{gathered}
\end{equation}
We use the term Orlicz function in this sense; in particular, linear
functions are allowed. Define
\begin{equation}\label{eq:measure}
 \Phi(x)=\sum_{i=1}^d\phi_i(|x_i|),\qquad
 d\mu_s(x)=\frac{1}{Z_s}e^{-\lambda\sum_i|x_i|}
                    \ind_{\{\Phi(x)\le s\}}\,dx,
 \qquad s>0.
\end{equation}
Let $X_s$ have distribution $\mu_s$, set $X=X_E$, and use the convention
$X_0=0$. Absolute values of vectors are always taken coordinatewise.

The support in \eqref{eq:measure} is compact, since
$|x_i|\le\phi_i^{-1}(s)$ there. It has nonempty interior. The function
$\Phi$ is convex, so the density is log-concave. It is also invariant
under every coordinatewise change of signs, that is, \emph{unconditional}.
Thus we may represent
\begin{equation}\label{eq:signrepresentation}
 X_s=\eps\odot U_s,
\end{equation}
where $U_s$ has the law of $|X_s|$, the coordinates of $\eps$ are independent
symmetric signs, and $\eps$ is independent of $U_s$.
The symbol $\odot$ denotes coordinatewise multiplication.

\subsection{What Sudakov minoration means here}
For an integrable symmetric vector $Y$ and a nonempty finite
$T\subset\R^d$, write
\[
 W_Y(T)=\E\max_{t\in T}\ip{t}{Y}.
\]
The vector is centered because it is symmetric. Hence translating $T$
does not change $W_Y(T)$.

\begin{definition}\label{def:smp}
The vector $Y$ satisfies $\SMP(\kappa)$ if, for every $p\ge1$, every finite
$T\subset\R^d$ with $|T|\ge e^p$, and every $a>0$, the separation
condition
\begin{equation}\label{eq:separation}
 \norm{\ip{t-u}{Y}}_p\ge a
 \qquad\text{for all distinct }t,u\in T
\end{equation}
implies $W_Y(T)\ge\kappa a$.
\end{definition}
Some references use the expected supremum of all increments instead of
$W_Y(T)$. For symmetric vectors, the former is $2W_Y(T)$, so this changes
only the numerical constant; see \eqref{eq:widthseminorm} below.

\begin{theorem}[Sudakov minoration]\label{thm:main}
Under \eqref{eq:assumptions}, the vector $X=X_E$ satisfies
$\SMP(\kappa)$ for a universal $\kappa>0$. The constant is independent
of $d$, $E$, $\lambda$, and all the functions $\phi_i$.
\end{theorem}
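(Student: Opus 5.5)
The plan is to transfer Sudakov minoration from a product reference vector to $X$, following the boxed comparison stated at the start. I would construct a single random vector $V$ with independent symmetric coordinates and log-concave tails, together with a universal constant $M$, such that
\[
 X\cx 2MV,\qquad \E q(V)\le 2\E q(X)\quad\text{for every seminorm } q.
\]
Given these two facts the theorem follows quickly. Take $T$ with $|T|\ge e^p$ satisfying \eqref{eq:separation}. Since $x\mapsto|\ip{t-u}{x}|^p$ is convex, the convex order gives $\norm{\ip{t-u}{V}}_p\ge a/(2M)$. The independent-coordinate minoration of Lata\l a \cite{Latala97} then yields $W_V(T)\ge c\,a/(2M)$. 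Finally, $q(x)=\max_{t,u\in T}\ip{t-u}{x}$ is a seminorm, and by symmetry $\E q(Y)=2W_Y(T)$. Hence $W_X(T)\ge \tfrac12 W_V(T)\ge \kappa a$.

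\textbf{Construction of $V$.} Let $V$ have density $\prod_i c_{i,\tau}e^{-\lambda|v_i|-\tau\phi_i(|v_i|)}$. Each coordinate is log-concave and symmetric, so the external theorem applies. Adding an independent exponential slack variable $S$ with rate $\tau$, the pair $(V,S)$ has joint density proportional to $e^{-\lambda\sum|v_i|-\tau(\Phi(v)+S)}$. Conditioning on the total energy $B=\Phi(V)+S$ shows that $V$ is an exact mixture of the laws $\mu_b$ over budgets $b$. This is the exact disintegration identity.

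\textbf{Comparison $\E q(V)\le 2\E q(X)$.} I would couple the budgets monotonically: realize $X_b$ for all $b$ on one probability space so that $|X_b|$ is coordinatewise nondecreasing in $b$. I would build this sequentially, coordinate by coordinate, using conditional quantile transforms. The key requirement is that each coordinate's energy increment $\phi_i(|X_{b'}|_i)-\phi_i(|X_b|_i)$ is at most the remaining extra budget $b'-b$, so the construction never exhausts more than the available difference.

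On the event $B\le E$ we get $|V|\le |X_E|$ coordinatewise. Sign contraction, that is, the symmetrization $X=\eps\odot U$ from \eqref{eq:signrepresentation}, then gives $\E q(V\ind_{B\le E})\le \E q(X)$.

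The residual term $\E q(V\ind_{B>E})$ must also be bounded by $\E q(X)$. For this I would choose $\tau$ large, of order $d/E$ times a constant, so that $B$ concentrates well below $E$. On $B\in(kE,(k+1)E]$ the monotone coupling, combined with a dilation argument using convexity of $\phi_i$ and $\phi_i(0)=0$, should give $\E q(X_{(k+1)E})\le (k+1)\E q(X_E)$. The tail probabilities of $B$, which is Gamma-like, then sum to something small. This residual absorption is the step I expect to be the main obstacle: it must hold with no growth assumptions on $\phi_i$, and it is where the size of $\tau$ gets fixed.

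\textbf{Reverse comparison $X\cx 2MV$.} Making $\tau$ large shrinks $V$, so the convex order must be obtained after dilation by a universal constant $M$. I would show $V$ restricted to the event $\{B\ge E/M'\}$ dominates $X_{E/M'}$ via the same coupling. Then I would use log-concavity, namely that $\E q(X_{E/M'})$ and the $L_p$ norms of $\ip{t}{X_{E/M'}}$ are comparable to those of $X_E$ up to universal factors. The convex order then follows from the mixture representation together with Jensen's inequality, since conditional expectations of $2MV$ given $X$ can be arranged to equal $X$ by unconditionality. The calibration must keep $M$ universal simultaneously with the choice of $\tau$ made in the residual step.
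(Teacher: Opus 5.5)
Your outer architecture matches the paper's: transfer through the range seminorm, the product reference with exponential slack, the ordered budget coupling with energy increments bounded by the extra budget, sign contraction, and the bound on $\{B\le E\}$. However, the two steps that make the constants universal are not supplied, and the choices you propose for them fail.

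First, the scale $\tau\asymp d/E$ is wrong in general. Take $\lambda=0$, $E=1$, $\phi_1(u)=u$ and $\phi_i(u)=u^{d^2}$ for $i\ge2$. Then $|X_1|$ has density proportional to $(1-x)^{\gamma}$ on $[0,1]$ with $\gamma=(d-1)/d^2\le\tfrac14$, so $\E|X_1|\ge\tfrac49$. Meanwhile $|V_1|$ is exponential with mean $1/\tau$. Hence $X\cx 2MV$ forces $M\ge 2\tau/9$, which is of order $d$. In the same example $B$ is Gamma with shape $2+\gamma$ and rate $\tau$, so $\PP(B\ge E/M')$ is exponentially small in $d$. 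This also breaks your reverse-direction plan. The correct scale depends on $W$, not on $d$: the paper fixes $\tau_0$ by $\PP(S_{\tau_0}\le E)=\tfrac12$ and uses $\tau=M\tau_0$. Because $W$ is nondecreasing, the hazard of $S_{\tau_0}$ is at most $\tau_0$. This puts mass $\tfrac14$ below $E-\log(3/2)/\tau_0$, and exponential tilting then gives $\PP(S_{M\tau_0}>E)\le2(2/3)^{M-1}$ uniformly in $d$ and the $\phi_i$.

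Second, your residual step ($\E q(X_{(k+1)E})\le(k+1)\E q(X_E)$) and your reverse step ($X_{E/M'}$ comparable to $X_E$) both rest on a dilation comparison between constrained laws at different budgets, and you give no argument for it. With $K_s=\{\Phi\le s\}$, convexity and $\phi_i(0)=0$ yield only the inclusion $K_{\rho s}\subset\rho K_s$ for $\rho\ge1$. Inclusion of unconditional convex bodies does not give convex order of their uniform measures: $[-0.9,0.9]\times[-0.1,0.1]$ lies in $\{|x_1|+|x_2|\le1\}$, yet has $\E|x_1|=0.45>\tfrac13$. The budget coupling gives only the monotone direction $X_s\cx X_t$ for $s\le t$.

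The paper never needs such a comparison. For the residual, keep your split, but use Cauchy--Schwarz and Borell's reverse H\"older inequality for the log-concave product vector:
\[
 \E\bigl[\ind_{\{B>E\}}q(V)\bigr]\le C_B\sqrt{\PP(B>E)}\,\E q(V).
\]
This is at most $\tfrac12\E q(V)$ once $\PP(B>E)\le(4C_B^2)^{-1}$, and it is then absorbed into the left side of $\E q(V)\le\E q(X)+\tfrac12\E q(V)$. For the reverse direction, the median calibration gives $\E\bar f(V_{\tau_0})\ge\tfrac12\E\bar f(X)+\tfrac12\bar f(0)$ for every sign-averaged convex $\bar f$. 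This uses only $X_E\cx X_s$ for $s\ge E$ and $\bar f\ge\bar f(0)$, and convexity then yields $X\cx2V_{\tau_0}$. The dilation is performed on the product side, where $|V_{\tau_0,i}|\st M|V_{M\tau_0,i}|$ is a one-dimensional monotone-likelihood-ratio fact that uses only that $\phi_i'$ is nondecreasing.
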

Neither isotropy nor a common upper-growth bound such as
$\phi_i(2r)\le C\phi_i(r)$ is assumed.

\subsection{The stronger comparison behind the theorem}
For integrable random vectors, write $Y\cx Z$ if
$\E f(Y)\le\E f(Z)$ for every convex function for which the comparison
is defined. This is \emph{convex order}. In the applications below the
vectors are centered; there is therefore no conflict with the fact that
convex order also compares affine functions in both directions.

\begin{theorem}[Comparison with one product distribution]\label{thm:comparison}
There is a universal $M\ge1$ and a parameter $\tau>0$ such that the vector
$V$ with independent coordinates and marginal densities
\begin{equation}\label{eq:product}
 f_{i,\tau}(v)=c_{i,\tau}
              \exp\{-\lambda|v|-\tau\phi_i(|v|)\},
 \qquad v\in\R,
\end{equation}
satisfies
\begin{equation}\label{eq:comparison}
 X\cx 2MV,
 \qquad
 \E q(V)\le2\E q(X)
 \quad\text{for every seminorm }q\text{ on }\R^d.
\end{equation}
The parameter $\tau$ depends on the distribution of $X$, but neither
$\tau$ nor $V$ depends on $T$, $p$, or $q$.
\end{theorem}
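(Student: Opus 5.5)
The argument rests on an exact mixture identity. Let $V^{(\tau)}$ have the product density \eqref{eq:product}, and let $\Theta\sim\mathrm{Exp}(\tau)$ be independent of it. Consider the joint density of $(v,\theta)$, which is proportional to $e^{-\lambda\sum|v_i|-\tau(\Phi(v)+\theta)}$, and condition on the total $S=\Phi(V)+\Theta$. Given $S=s$, the vector $V$ has density proportional to $e^{-\lambda\sum|v_i|}\ind_{\{\Phi(v)\le s\}}$, because the slack integrates out to a constant on the set $\{\Phi\le s\}$. Hence $\Law(V)=\int\mu_s\,\Law(S)(ds)$ exactly, with $S$ having density proportional to $Z_s e^{-\tau s}$. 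So $V$ is a mixture of the vectors $X_s$ over the budget $S$, and $\tau$ only moves the mixing law.

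The core step is a monotone coupling of budgets. For $s\le s'$ I would construct $(X_s,X_{s'})$ on one probability space with $|X_s|\le|X_{s'}|$ coordinatewise and common signs, via \eqref{eq:signrepresentation}. The construction is sequential over coordinates. Reveal $|x_1|$ by a quantile coupling of the marginals of $U_s$ and $U_{s'}$; these are stochastically ordered because the density of $U_s$ is log-concave and restricted to a growing convex set. Then pass to the conditional laws of the remaining coordinates, which are again of the same form with residual budgets $s-\phi_1(|x_1|)$ and $s'-\phi_1(|x_1'|)$. The key invariant is that each coordinate's energy increase is at most the available extra budget, i.e. $\phi_i(|x_i'|)-\phi_i(|x_i|)\le$ (current residual difference). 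This keeps the residual budgets ordered, $s-\cdot\le s'-\cdot$, so the induction continues. I expect this to be the main obstacle. One needs to check that the quantile coupling can be clipped, replacing $|x_i'|$ by $\min(|x_i'|,\phi_i^{-1}(\phi_i(|x_i|)+\text{gap}))$, while still reproducing the correct law of $X_{s'}$. My first attempt would be a stochastic-domination argument on the one-dimensional conditional marginals, using their explicit form in terms of the volume function $s\mapsto Z_s$, which is log-concave by Brunn--Minkowski.

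Given this monotone family, the two inequalities follow.

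\emph{Second inequality.} Choose $\tau$ large, calibrated so that $\PP(S>E)\le\delta$ for a small universal $\delta$, while $S$ remains of order $E$ with positive probability. On $\{S\le E\}$, the coupling gives $|V|\le|X|$ with common signs. For a seminorm $q$, contraction (conditional on $|\cdot|$, $q(\eps\odot u)$ is dominated in mean as $u$ increases) gives $\E q(V)\ind_{\{S\le E\}}\le\E q(X)$. The residual part on $\{S>E\}$ must be absorbed into $\E q(X)$. Here I would use the coupling in the other direction, with $X_S$ for $S>E$ written as $X_E$ plus a correction whose energy is at most $S-E$. Exponential tails of $S$ beyond $E$ (from log-concavity of $Z_se^{-\tau s}$) together with the budget-increment bound show that the correction contributes at most $\E q(X)$. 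This yields the factor $2$.

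\emph{First inequality.} For $X\cx 2MV$, go the other way. Because the mixing law of $S$ has log-concave density, $\PP(S\ge E/M')\ge c$ for a universal $M'$ depending on the calibration. On that event, coupling gives $|X_{E/M'}|\le|V|$. Then $X_E$ is compared to a dilate of $X_{E/M'}$ using convexity of the $\phi_i$: $\phi_i(r/M')\le\phi_i(r)/M'$, so $X_E/M'$ can be coupled to satisfy the budget $E/M'$. For a centered symmetric vector, replacing it by $0$ on an event of probability at most $1-c$ decreases it in convex order (Jensen applied conditionally on $|V|$). Combined with a further factor $2$ from splitting the events, this gives $X\cx 2MV$.
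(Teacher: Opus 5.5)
Your outline (slack mixture, ordered budget coupling, a calibration making $\{S>E\}$ rare, sign contraction) follows the paper. However, each of the three steps that carries an actual estimate is either unproved or fails as you describe it.

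\textbf{Coupling.} Clipping $|x_i'|$ either does nothing or changes the law of $X_{s'}$. What you need is the one-coordinate sandwich $Y_b\st Y_{b+\Delta}\st Y_b+\Delta$ for the energy $Y_b=\phi_i(|x_i|)$. Once you have it, the plain common-quantile coupling gives $0\le Y_{b+\Delta}-Y_b\le\Delta$ automatically.

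Log-concavity of the section volumes gives the left inequality, through a monotone likelihood ratio. (``Restriction to a growing convex set'' does not order marginals in general.) The right inequality is what keeps the residual budgets ordered, and it needs a separate fact. The energy density $w_i=e^{-\lambda\psi_i}\psi_i'$, with $\psi_i=\phi_i^{-1}$, is nonincreasing because $\psi_i$ is concave. Shifting the tail integral by $\Delta$ and using this, together with monotonicity of $W_J$, gives the right inequality.

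\textbf{The bound $\E q(V)\le2\E q(X)$.} The only information you use about the correction is its energy, i.e.\ $|V|-|Q|\in K_{S-E}$. A bound of the form $\E[\ind_{\{S>E\}}\sup_{K_{S-E}}q]$ is not dimension-free.

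Take $\lambda=0$, $E=1$ and $\phi_i(u)=u^r$ with $r\gg d$. Then $S$ is essentially exponential, so on $\{S>E\}$ the overflow $S-E$ is of order $E/\log(1/\delta)$, and $K_{S-E}$ is still essentially the cube $[-1,1]^d$. For $q(x)=|\sum_ix_i|$ this gives about $\delta d$, while $\E q(X)\asymp\sqrt d$. With universal $\delta$ the bound fails once $d\gg\delta^{-2}$. Tails of $S$ cannot help, because $q$ is large on the whole event.

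The paper's estimate is distributional instead:
\begin{itemize}
\item Conditional sign contraction gives $\E q(D)\le\E[\ind_{\{S>E\}}q(V)]$.
\item Cauchy--Schwarz and Borell's inequality $(\E q(V)^2)^{1/2}\le C_B\E q(V)$ for the log-concave product vector give $\E q(D)\le C_B\sqrt{\PP(S>E)}\,\E q(V)$.
\item The right side involves $\E q(V)$, not $\E q(X)$, so one closes by absorption: $\E q(V)\le\E q(X)+\theta\E q(V)$ with $\theta\le1/2$.
\end{itemize}

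\textbf{The order $X\cx2MV$.} Your step $X_E\cx M'X_{E/M'}$ rests only on $X_E/M'\in K_{E/M'}$. That is support inclusion, not domination, and the claim is false in general.

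Let $d=2$, $\lambda=0$, $\phi_1(u)=u$, and let $\phi_2$ have inverse $\psi_2(b)=\min\{b,1+\epsilon(b-1)\}$ with $0<\epsilon<1$. The first magnitude of $X_s$ has density proportional to $\psi_2(s-x)$ on $[0,s]$. Hence $\PP(|X_{s,1}|>s-u)=u^2/(2N_s)$ for $u\le1$, where $N_s=\int_0^s\psi_2$. For $E=4$, $M'=2$ and $0<t\le1/2$, one gets $\PP(|X_{4,1}|/2>2-t)=2t^2/N_4$ against $\PP(|X_{2,1}|>2-t)=t^2/(2N_2)$. Since $4N_2=6+2\epsilon>7/2+9\epsilon/2=N_4$, the convex function $(|x_1|-3/2)_+$ has strictly larger mean under $X_4/2$ than under $X_2$. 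The same computation works for every $M'>1$ with $E\ge M'$ and $\epsilon$ small.

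The paper never dilates constrained laws. It fixes $\tau_0$ so that $E$ is the median of $S_{\tau_0}$. On $\{S_{\tau_0}\ge E\}$ the conditional law $\mu_s$ dominates $\mu_E$, and the median argument gives $X\cx2V_{\tau_0}$. The dilation is then done between the product laws, $V_{\tau_0}\cx MV_{M\tau_0}$. There it reduces to the one-dimensional likelihood ratio $\exp\{\lambda(1-1/M)x+\tau_0\phi_i(x)-M\tau_0\phi_i(x/M)\}$, which is nondecreasing because $\phi_i'$ is. The same $M$ also gives $\PP(S_{M\tau_0}>E)\le2(2/3)^{M-1}$, via the hazard bound and exponential tilting; this is what links the two halves.
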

The two conclusions point in different directions. Convex order says that
linear moments of $X$ cannot exceed those of a universal dilation of $V$.
The seminorm comparison says that $V$ is not larger than $X$ at the level
of expected suprema. Both are needed.

\section{Why these comparisons transfer minoration}\label{sec:transfer}
We first isolate an elementary principle. Doing so makes the purpose of
each later estimate transparent.

\begin{lemma}[Transfer principle]\label{lem:transfer}
Let $X$ and $V$ be integrable symmetric vectors, and suppose $V$ satisfies
$\SMP(\kappa_0)$. Assume that, for constants $A,B>0$,
\begin{align}
 \norm{\ip{h}{X}}_p&\le A\norm{\ip{h}{V}}_p
 &&(h\in\R^d,\ p\ge1),\label{eq:transfermoment}\\
 \E q(V)&\le B\E q(X)
 &&(q\text{ a seminorm}).\label{eq:transfernorm}
\end{align}
Then $X$ satisfies $\SMP(\kappa_0/(AB))$.
\end{lemma}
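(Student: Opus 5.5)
The argument is direct: we transfer the separation hypothesis from $X$ to $V$ using \eqref{eq:transfermoment}, apply $\SMP(\kappa_0)$ to $V$ at a rescaled level, and transfer the resulting width bound back to $X$ using \eqref{eq:transfernorm}. Fix $p\ge1$, a finite $T\subset\R^d$ with $|T|\ge e^p$, and $a>0$, and assume \eqref{eq:separation} holds for $X$, that is, $\norm{\ip{t-u}{X}}_p\ge a$ for all distinct $t,u\in T$.

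\proofstep{Step 1 (separation for $V$).} Apply \eqref{eq:transfermoment} with $h=t-u$. This gives $a\le\norm{\ip{t-u}{X}}_p\le A\norm{\ip{t-u}{V}}_p$. Hence $T$ is $(a/A)$-separated in $L_p$ for $V$, with the same $p$ and the same cardinality condition $|T|\ge e^p$. Since $V$ satisfies $\SMP(\kappa_0)$, we get $W_V(T)\ge\kappa_0 a/A$.

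\proofstep{Step 2 (width as an expected seminorm).} This is the only point needing care, because \eqref{eq:transfernorm} is stated for seminorms while $W_Y(T)$ is a maximum of linear forms. Fix any $t_0\in T$. Since $Y$ is symmetric, translation invariance gives $W_Y(T)=\E\max_{t\in T}\ip{t-t_0}{Y}$. Define
\[
 q_T(y)=\max_{t,u\in T}\ip{t-u}{y}=\max_{t,u\in T}|\ip{t-u}{y}|.
\]
This is a maximum of absolute values of linear functionals, so it is a seminorm. We have $q_T(y)=\max_t\ip{t}{y}+\max_u\ip{-u}{y}$. Because $Y$ and $-Y$ have the same law, $\E q_T(Y)=2W_Y(T)$. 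This is the identity the paper labels \eqref{eq:widthseminorm}, and it uses only symmetry and integrability, which we have for both $X$ and $V$.

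\proofstep{Step 3 (transfer back).} By \eqref{eq:transfernorm} applied to $q=q_T$,
\[
 2W_V(T)=\E q_T(V)\le B\,\E q_T(X)=2B\,W_X(T).
\]
Combining this with Step 1 gives $W_X(T)\ge W_V(T)/B\ge\kappa_0a/(AB)$. Since $p$, $T$ and $a$ were arbitrary, $X$ satisfies $\SMP(\kappa_0/(AB))$.

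I expect no real obstacle. The one point to check is Step 2: that $q_T$ is a genuine seminorm (it is symmetric and subadditive, being a supremum of $|\ell(y)|$ over linear functionals $\ell$), and that the factor $2$ appears identically on both sides, so it cancels.
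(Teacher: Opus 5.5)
Your proof is correct and matches the paper's argument step for step. You transfer the $L_p$ separation to $V$ at level $a/A$ and apply $\SMP(\kappa_0)$, then use the range seminorm $q_T$ with the identity $\E q_T(Y)=2W_Y(T)$ to bring the width bound back to $X$ through the seminorm comparison; the translation-invariance remark involving $t_0$ in Step 2 is never used and can be dropped.
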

\begin{proof}
Suppose $T,p,a$ satisfy Definition~\ref{def:smp} for $X$.
By \eqref{eq:transfermoment}, distinct points of $T$ are separated for
$V$ at level $a/A$. Thus
\[
 W_V(T)\ge\frac{\kappa_0a}{A}.
\]
To use \eqref{eq:transfernorm}, introduce the range seminorm
\begin{equation}\label{eq:qT}
 q_T(x)=\max_{t,u\in T}\ip{t-u}{x}
       =\max_{t\in T}\ip{t}{x}-\min_{u\in T}\ip{u}{x}.
\end{equation}
The set $T-T$ is symmetric and contains zero, so $q_T$ is nonnegative,
subadditive, and absolutely homogeneous. It is therefore a seminorm,
even when $T-T$ does not span the entire space.

For any integrable symmetric $Y$,
\begin{equation}\label{eq:widthseminorm}
 \E q_T(Y)
 =\E\max_{t\in T}\ip{t}{Y}
  +\E\max_{u\in T}\ip{u}{-Y}
 =2W_Y(T).
\end{equation}
Using \eqref{eq:transfernorm} with $q_T$ yields
$W_V(T)\le B W_X(T)$, and the result follows.
\end{proof}

Convex order $X\cx AV$ implies \eqref{eq:transfermoment}, because
$x\mapsto|\ip{h}{x}|^p$ is convex for $p\ge1$.
Thus Theorem~\ref{thm:comparison} gives Theorem~\ref{thm:main} with
$A=2M$ and $B=2$, once independent-coordinate minoration is available.

\subsection*{A roadmap for constructing the comparison}
We first construct a coupling that orders the magnitudes of $X_s$ as the
budget $s$ increases. Next, we express a product vector $V_\tau$ as a
mixture of these $X_s$ by adding exponential slack to its energy.
We choose a parameter for which $E$ is a median of the random budget,
and then increase that parameter by a universal factor $M$.

At this increased parameter, a budget above $E$ is a rare event.
The monotone coupling corrects $V_\tau$ to a vector $Q$ whose magnitude
is dominated by that of $X$. The residual $D=V_\tau-Q$ vanishes outside
the rare event. Sign contraction and a second-moment estimate give
\[
 \E q(D)\le\theta\,\E q(V_\tau),\qquad \theta<1.
\]
Since $V_\tau=Q+D$, the residual can be absorbed into the left-hand side.
This proves the expected-seminorm comparison. Finally, the median
calibration and a one-coordinate density comparison give the convex order
in the other direction.

\section{Three external inputs and an elementary sign lemma}\label{sec:tools}

\subsection{Log-concavity survives integration}
Pr\'ekopa's theorem \cite{Prekopa} states that if $F(x,y)$ is a nonnegative
log-concave function, then
\[
 x\longmapsto\int F(x,y)\,dy
\]
is log-concave whenever the integral is finite. We will apply this to
weighted volumes of sections of an Orlicz ball. The extra variable will
be the available budget, not another coordinate of the random vector.

\subsection{A seminorm cannot concentrate its mean on a very rare event}
A consequence of Borell's theory of log-concave measures \cite{Borell} is
that there exists a universal $C_B\ge1$ such that, for a log-concave vector
$Y$ and every seminorm $q$,
\begin{equation}\label{eq:borell}
 \bigl(\E q(Y)^2\bigr)^{1/2}\le C_B\E q(Y).
\end{equation}
Consequently, for any event $B$, whether or not independent of $Y$,
\begin{equation}\label{eq:raremean}
 \E[\ind_Bq(Y)]
 \le \sqrt{\PP(B)}\,\bigl(\E q(Y)^2\bigr)^{1/2}
 \le C_B\sqrt{\PP(B)}\,\E q(Y).
\end{equation}
This is the exact reason that a rare correction will be inexpensive.
Only the product vector will need to be log-concave here; the corrected
vector and the residual need not be.

\subsection{Minoration for independent log-concave coordinates}
We use the following established result: an independent vector with
symmetric log-concave coordinate distributions satisfies
$\SMP(\kappa_{\rm ind})$, where $\kappa_{\rm ind}>0$ is universal.
The log-concave-tail version is due to Lata\l a \cite{Latala97}; it is
restated as Theorem~14 in the arXiv version of \cite{LT}.

It applies to \eqref{eq:product}. Indeed, the positive tail is, up to a
constant,
\[
 t\longmapsto\int_0^\infty
       e^{-\lambda(t+u)-\tau\phi_i(t+u)}\,du,
 \qquad t\ge0,
\]
which is log-concave by Pr\'ekopa's theorem.
The coordinates may have different variances: dividing each coordinate
by its standard deviation and multiplying the corresponding index
coordinate by that standard deviation leaves the canonical process
unchanged. All relevant variances are finite and positive.

\subsection{Why coordinatewise contraction works after averaging signs}
\begin{lemma}[Sign contraction]\label{lem:sign}
If $0\le a_i\le b_i$ for $i=1,\ldots,d$, then, for every convex
$f:\R^d\to\R$,
\begin{equation}\label{eq:sign}
 \E_\eps f(\eps\odot a)\le\E_\eps f(\eps\odot b).
\end{equation}
\end{lemma}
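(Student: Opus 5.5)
The plan is to reduce to one coordinate at a time and then use convexity of the one-variable functions obtained by freezing the others. Since $a$ can be moved to $b$ by changing one coordinate at a time, it suffices to prove
\[
 \E_\eps f(\eps\odot a)\le\E_\eps f(\eps\odot a')
\]
when $a$ and $a'$ agree except in coordinate $j$, where $0\le a_j\le a'_j$. Chaining these $d$ single-coordinate steps gives \eqref{eq:sign}.

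For a single step, condition on the signs $\eps_i$ with $i\ne j$. This fixes the other coordinates of $\eps\odot a$, and they are the same for $a$ and $a'$. Define
\[
 g(s)=f(y+s e_j),
\]
where $y$ is the frozen remainder and $e_j$ is the $j$-th basis vector. Then $g$ is a convex function of one real variable. Averaging over $\eps_j$ alone, the claim becomes
\[
 h(s)=\tfrac12\bigl(g(s)+g(-s)\bigr)\quad\text{is nondecreasing on }[0,\infty).
\]
The function $h$ is convex and even. Convexity gives $h(0)\le\tfrac12\bigl(h(s)+h(-s)\bigr)=h(s)$. Now take $0\le s<t$ and write $s=\alpha\cdot 0+(1-\alpha)t$. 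Convexity yields
\[
 h(s)\le\alpha h(0)+(1-\alpha)h(t)\le h(t).
\]
Hence $h(a_j)\le h(a'_j)$. Taking expectation over the frozen signs finishes the step.

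An alternative one-line route is to observe that the two-point law of $\eps_j a_j$ is a mean-preserving contraction of that of $\eps_j a'_j$. Concretely, $\eps_j a_j=\E[\eps_j a'_j\mid\cdot]$ under a suitable randomization, so Jensen's inequality applies. I would keep the monotonicity-of-$h$ argument because it is elementary.

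No real obstacle is expected. The only points requiring care are integrability and finiteness. Since $f$ is real-valued on $\R^d$ and the sign averages are finite sums, nothing can be infinite. Convexity of $g$ is immediate because it is the restriction of $f$ to a line.
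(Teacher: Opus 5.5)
Your proof is correct and follows essentially the same route as the paper: you move one coordinate at a time and use that an even convex function of one variable is nondecreasing on $[0,\infty)$. The differences are cosmetic. You freeze the other signs and average over $\eps_j$ alone, whereas the paper first averages $f$ over all sign patterns to obtain a convex unconditional $\bar f$. You also prove monotonicity via $h(0)\le h(t)$ and a convex combination of $0$ and $t$, while the paper writes $a$ as a convex combination of $b$ and $-b$.
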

\begin{proof}
Define the sign average
\[
 \bar f(x)=2^{-d}\sum_{\eps\in\{-1,1\}^d}f(\eps\odot x).
\]
It is convex and unconditional. With all other coordinates fixed,
it is an even convex function of the remaining coordinate. An even
convex function is nondecreasing on $[0,\infty)$: for $0\le a\le b$
with $b>0$, express $a$ as a convex combination of $b$ and $-b$.
Applying this argument one coordinate at a time proves
$\bar f(a)\le\bar f(b)$.
\end{proof}
In particular, a coupling with $|Y|\le|Z|$ coordinatewise, together with
common independent signs, implies $Y\cx Z$.
The averaging is important: a general seminorm need not be monotone
under a coordinatewise decrease of magnitudes before signs are averaged.

\section{Coupling the vectors at different budgets}\label{sec:budget}
The main structural fact is that the positive-coordinate laws can be
coupled increasingly in the budget. We prove a stronger energy statement,
which explains why the coordinate-by-coordinate construction is possible.

\subsection{Weighted section volumes}
For $J\subset\{1,\ldots,d\}$ and $b\ge0$, set
\begin{equation}\label{eq:W}
 W_J(b)=\int_{[0,\infty)^J}e^{-\lambda\sum_{j\in J}x_j}
          \ind_{\{\sum_{j\in J}\phi_j(x_j)\le b\}}\,dx_J.
\end{equation}
For negative $b$, put $W_J(b)=0$; for the empty set, use
$W_\varnothing(b)=1$ when $b\ge0$.
If $J$ is nonempty, then $W_J$ is positive and finite on $(0,\infty)$.
It is nondecreasing simply because increasing the budget enlarges the
integration region.

More importantly, $W_J$ is log-concave. The set
\[
 \left\{(x_J,b):x_j\ge0,\ \sum_{j\in J}\phi_j(x_j)\le b\right\}
\]
is convex. Its indicator, multiplied by
$e^{-\lambda\sum_{j\in J}x_j}$, is log-concave in $(x_J,b)$.
Integration in $x_J$ and Pr\'ekopa's theorem prove the assertion.
Thus
\begin{equation}\label{eq:Wproperties}
 W_J\text{ is nondecreasing and log-concave on }(0,\infty).
\end{equation}
These are different properties, and both will be used.

\subsection{Using energy rather than position as the coordinate}
Let $\psi_i=\phi_i^{-1}$. The assumptions imply that $\psi_i$ is increasing
and concave, with $\psi_i(0)=0$. Under the change of variables
$y=\phi_i(x)$, the one-coordinate weighted measure becomes
\begin{equation}\label{eq:w}
 e^{-\lambda x}\,dx=w_i(y)\,dy,
 \qquad w_i(y)=e^{-\lambda\psi_i(y)}\psi_i'(y),\quad y>0.
\end{equation}
The two factors in $w_i$ are nonnegative and nonincreasing. Hence $w_i$
has a nonincreasing version.

No differentiability assumption on $\phi_i$ is being added.
The derivative of the concave inverse is understood almost everywhere.
For $0<a<b$, concavity gives
$\psi_i(b)-\psi_i(a)=\int_a^b\psi_i'(y)\,dy$.
Letting $a\downarrow0$ gives the same formula on $[0,b]$, since
$\psi_i$ is continuous at zero and its derivative is nonnegative.
This proves the local absolute continuity needed for the substitution.
The density $w_i$ may be unbounded near zero; this causes no problem.

\subsection{One-coordinate comparison: two inequalities, not one}
Fix $i\in J$. Under the positive-orthant law with coordinate set $J$ and
budget $b>0$, let $Y_b=\phi_i(x_i)$. Integrating the remaining coordinates
gives its density
\begin{equation}\label{eq:conditional}
 g_b(y)=\frac{w_i(y)W_{J\setminus\{i\}}(b-y)}{W_J(b)},
 \qquad 0<y<b.
\end{equation}
For real random variables, $A\st B$ means stochastic domination,
equivalently $\PP(A>y)\le\PP(B>y)$ for every $y$.

\begin{lemma}[An energy coordinate is monotone and 1-Lipschitz in budget]
\label{lem:one}
For $b>0$ and $\Delta\ge0$,
\begin{equation}\label{eq:stochsandwich}
 Y_b\st Y_{b+\Delta}\st Y_b+\Delta.
\end{equation}
Consequently, the common-quantile coupling satisfies
\begin{equation}\label{eq:quantile}
 0\le Y_{b+\Delta}-Y_b\le\Delta\qquad\text{almost surely}.
\end{equation}
\end{lemma}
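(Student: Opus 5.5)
We prove both halves of \eqref{eq:stochsandwich} by likelihood-ratio arguments based on \eqref{eq:Wproperties}, writing $K=J\setminus\{i\}$ and $G=W_K$. By \eqref{eq:conditional}, the density of $Y_b$ is proportional to $w_i(y)G(b-y)$ on $(0,b)$.

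For the first inequality, $Y_b\st Y_{b+\Delta}$, we compare densities through their ratio. On $(0,b)$,
\[
 \frac{g_{b+\Delta}(y)}{g_b(y)}
 \propto\frac{G(b+\Delta-y)}{G(b-y)},
\]
while on $[b,b+\Delta)$ only $g_{b+\Delta}$ is positive. Log-concavity of $G$ gives that $u\mapsto\log G(u+\Delta)-\log G(u)$ is nonincreasing in $u$. With $u=b-y$, the ratio is therefore nondecreasing in $y$. This is the monotone likelihood ratio property, and it implies stochastic order.

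When $K=\varnothing$ we have $G=\ind_{[0,\infty)}$. The ratio then equals $1$ on $(0,b)$ and $+\infty$ beyond $b$, which is again monotone. One point needs care: $G$ may vanish at $0$, and log-concavity only holds on $(0,\infty)$. Boundary points form a null set, so they can be ignored.

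For the second inequality, $Y_{b+\Delta}\st Y_b+\Delta$, we write the density of $Y_b+\Delta$ as $w_i(y-\Delta)G(b+\Delta-y)$ on $(\Delta,b+\Delta)$, up to normalization. The ratio of the density of $Y_b+\Delta$ to that of $Y_{b+\Delta}$ is
\[
 \frac{w_i(y-\Delta)}{w_i(y)}
\]
on $(\Delta,b+\Delta)$. The factor $G(b+\Delta-y)$ is common to both densities and cancels. On $(0,\Delta)$ the density of $Y_b+\Delta$ is $0$.

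So we need $y\mapsto w_i(y-\Delta)/w_i(y)$ to be nondecreasing, i.e.\ $w_i$ log-concave. Only monotonicity of $w_i$ has been shown, so this route likely fails. I expect this to be the main obstacle.

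We therefore switch to comparing tails directly, and this time use that $G$ is nondecreasing. For $t\in(0,b)$ we need $\PP(Y_{b+\Delta}>t+\Delta)\le\PP(Y_b>t)$. Written out, this is
\[
 \frac{\int_{t+\Delta}^{b+\Delta}w_i(y)G(b+\Delta-y)\,dy}{W_J(b+\Delta)}
 \le\frac{\int_t^b w_i(y)G(b-y)\,dy}{W_J(b)}.
\]
We substitute $y\mapsto y+\Delta$ in the left numerator. Since $w_i$ is nonincreasing, $w_i(y+\Delta)\le w_i(y)$, so the left numerator is at most the right numerator. Finally, $W_J(b+\Delta)\ge W_J(b)$ because $W_J$ is nondecreasing. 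Together these give the inequality, and no log-concavity of $w_i$ is needed. For $t\ge b$ both sides vanish, and for $t\le0$ the right side equals $1$.

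This also explains the remark "two inequalities, not one". The lower comparison uses log-concavity of $W_K$. The upper comparison uses the monotonicity of $w_i$ and $W_J$.

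Finally, for \eqref{eq:quantile} we use the quantile coupling $Y_s=F_s^{-1}(U)$ with a single uniform $U$. Quantile functions preserve stochastic order, so the first inequality gives $F_b^{-1}\le F_{b+\Delta}^{-1}$. The quantile function of $Y_b+\Delta$ is $F_b^{-1}+\Delta$, so the second inequality gives $F_{b+\Delta}^{-1}\le F_b^{-1}+\Delta$. Together these yield $0\le Y_{b+\Delta}-Y_b\le\Delta$ almost surely.
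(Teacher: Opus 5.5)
Your proof is correct and follows the paper's argument. The first inequality uses the monotone likelihood ratio coming from log-concavity of $W_{J\setminus\{i\}}$. The second uses the shift $y\mapsto y+\Delta$ in the upper-tail integral, together with the facts that $w_i$ is nonincreasing and $W_J$ is nondecreasing. The lemma then closes with the common-quantile coupling. You were right that a likelihood-ratio route for the second inequality would need log-concavity of $w_i$, which can fail (e.g.\ $\lambda=0$, $\psi_i(y)=\sqrt y$). That is exactly why the paper compares tails directly there.
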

\begin{proof}
Write $W_-=W_{J\setminus\{i\}}$.

\proofstep{First inequality: a larger budget increases the energy.}
On the common support $0<y<b$,
\begin{equation}\label{eq:likelihoodbudget}
 \frac{g_{b+\Delta}(y)}{g_b(y)}
 =\frac{W_J(b)}{W_J(b+\Delta)}
  \frac{W_-(b+\Delta-y)}{W_-(b-y)}.
\end{equation}
For a concave function $\ell$, the increment
$\ell(z+\Delta)-\ell(z)$ is nonincreasing in $z$.
Take $\ell=\log W_-$ and $z=b-y$. The last ratio in
\eqref{eq:likelihoodbudget} is therefore nondecreasing in $y$.
Moreover, the extra support of $g_{b+\Delta}$ lies to the right of $b$.
The difference of the two densities can thus change sign only from
negative to positive. Since both have integral one, their distribution
functions are ordered, proving $Y_b\st Y_{b+\Delta}$.
When $J=\{i\}$, the same argument works with $W_-=1$.

\proofstep{Second inequality: the energy cannot increase by more than the extra budget.}
For $0\le y\le b$, substitute $u=v+\Delta$ in the upper-tail integral:
\begin{align*}
 \PP(Y_{b+\Delta}>y+\Delta)
 &=\frac{\int_y^b w_i(v+\Delta)W_-(b-v)\,dv}{W_J(b+\Delta)}\\
 &\le\frac{\int_y^b w_i(v)W_-(b-v)\,dv}{W_J(b)}
 =\PP(Y_b>y).
\end{align*}
Here $w_i$ is nonincreasing and $W_J$ is nondecreasing.
Outside the displayed range, the desired tail inequality is immediate
from the supports. This proves $Y_{b+\Delta}\st Y_b+\Delta$.

\proofstep{Putting the two comparisons in one coupling.}
If $F_b^{-1}$ denotes the generalized inverse distribution function, then
\eqref{eq:stochsandwich} gives
\[
 F_b^{-1}(u)\le F_{b+\Delta}^{-1}(u)
            \le F_b^{-1}(u)+\Delta,\qquad 0<u<1.
\]
Apply all three functions to the same uniform random variable.
\end{proof}

\begin{keypoint}
The first stochastic inequality alone would not suffice. In the sequential
construction, we must also preserve the order of the \emph{remaining}
budgets. The upper bound by $\Delta$ is what guarantees that order.
\end{keypoint}

\subsection{The sequential coupling}
\begin{proposition}[Monotone budget coupling]\label{prop:budget}
For $0\le s\le t$, there is a coupling of
$U_s\sim\Law(|X_s|)$ and $U_t\sim\Law(|X_t|)$ such that
\begin{equation}\label{eq:budget}
 U_s\le U_t\quad\text{coordinatewise},\qquad
 0\le\Phi(U_t)-\Phi(U_s)\le t-s.
\end{equation}
The construction is measurable in $(s,t)$ and may be chosen to be the
identity when $s=t$.
\end{proposition}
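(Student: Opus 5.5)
We construct the coupling sequentially, coordinate by coordinate, working throughout in energy coordinates $Y_i=\phi_i(U_i)$ rather than in positions. The law of $|X_s|$ is the positive-orthant law with budget $s$ on $J=\{1,\ldots,d\}$, normalized by $W_J(s)$. Its chain-rule decomposition is explicit. The first energy $Y_1=\phi_1(U_1)$ has density $g_s$ from \eqref{eq:conditional} with $i=1$. Conditionally on $Y_1=y_1$, the remaining coordinates $(U_2,\ldots,U_d)$ follow the positive-orthant law on $J\setminus\{1\}$ with budget $s-y_1$. Iterating, the $k$-th energy, given the first $k-1$, has the law of the one-coordinate energy from Lemma~\ref{lem:one} with index set $J_k=\{k,\ldots,d\}$ and remaining budget
\[
 b_k(s)=s-\sum_{j<k}Y_j(s).
\]
So $|X_s|$ is generated by $d$ successive one-dimensional draws, each depending only on the current remaining budget.

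We couple the two chains $s$ and $t$ by using the same independent uniforms $\xi_1,\ldots,\xi_d$. At step $k$ we set
\[
 Y_k(\cdot)=F^{-1}_{k,b_k(\cdot)}(\xi_k),
\]
the common-quantile coupling of Lemma~\ref{lem:one}. We then prove by induction on $k$ the invariant
\[
 0\le \Delta_k:=b_k(t)-b_k(s)\le t-s .
\]
It holds at $k=1$, where $\Delta_1=t-s$. Given the invariant at step $k$, we apply \eqref{eq:quantile} with $b=b_k(s)$ and $\Delta=\Delta_k$. This yields
\[
 0\le Y_k(t)-Y_k(s)\le\Delta_k .
\]
Hence $\Delta_{k+1}=\Delta_k-(Y_k(t)-Y_k(s))\in[0,\Delta_k]$, and the invariant propagates. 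This is exactly why the upper half of \eqref{eq:stochsandwich} is needed: without it, the remaining budget for $t$ could fall below that for $s$ and the monotone comparison would break down at the next step. Summing the increments gives
\[
 \Phi(U_t)-\Phi(U_s)=\sum_k\bigl(Y_k(t)-Y_k(s)\bigr)=\Delta_1-\Delta_{d+1}\in[0,t-s].
\]
Finally, $U_{s,k}=\psi_k(Y_k(s))$ with $\psi_k$ increasing, so $U_s\le U_t$ coordinatewise.

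Two degenerate cases need care. At the last step $J_d=\{d\}$, Lemma~\ref{lem:one} still applies with $W_\varnothing=1$. The remaining budget $b_k(s)$ may equal $0$, which happens in particular for $s=0$ with the convention $X_0=0$. In that case we set $Y_k=0$. The sandwich $0\le Y_k(t)-0\le \Delta_k$ still holds, because $Y_k(t)\le b_k(t)=\Delta_k$. Measurability in $(s,t)$ follows because each $F^{-1}_{k,b}(\xi)$ is jointly measurable in $(b,\xi)$: the distribution function is continuous in $b$ by dominated convergence and monotone in $y$. When $s=t$, the two chains use identical inputs and therefore coincide.

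The main obstacle is justifying that the sequential conditional laws are exactly those of Lemma~\ref{lem:one} for every intermediate index set and budget. This requires checking that the disintegration formula \eqref{eq:conditional} holds with $W_{J_{k+1}}(b-y)$ as the conditional normalizer, using the substitution \eqref{eq:w} coordinate by coordinate. It also requires checking that the almost-sure ordering from the quantile coupling survives atoms and zero-probability budgets. I would handle this by proving the chain-rule factorization of the density of $(Y_1,\ldots,Y_d)$, namely
\[
 \frac{\prod_j w_j(y_j)\,\ind_{\{\sum_j y_j\le s\}}}{W_J(s)},
\]
directly by Fubini, and then reading off each conditional.
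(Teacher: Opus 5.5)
Your proposal is correct and is essentially the paper's own argument. Both use a sequential common-quantile coupling in energy coordinates, where the upper half of Lemma~\ref{lem:one} keeps the remaining budgets ordered via $\Delta_{k+1}=\Delta_k-\delta_k\in[0,\Delta_k]$, and telescoping gives $\Phi(U_t)-\Phi(U_s)=\Delta_1-\Delta_{d+1}\in[0,t-s]$; your explicit handling of zero remaining budgets and of the chain-rule factorization of the energy density only spells out points the paper covers in a sentence each.
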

\begin{proof}
Sample coordinates in the order $1,\ldots,d$, using the same independent
uniform variables for the two vectors. Initially the remaining budgets
are $s$ and $t$. Suppose at some stage they are $b$ and $b+\Delta$.
Conditioning on the previously chosen coordinates leaves exactly the
positive-orthant law for the remaining coordinate set and the remaining
budget, because both the weight and the constraint are additive.

Use Lemma~\ref{lem:one} to select the next two energies. Their difference
$\delta$ satisfies $0\le\delta\le\Delta$. After those energies are
subtracted, the remaining budgets are still ordered, and their difference
is $\Delta-\delta$. Induction proves coordinatewise order of every energy,
and hence of every magnitude.

Writing $\Delta_k$ for the difference of remaining budgets before
coordinate $k$, and $\delta_k$ for the energy difference selected at that
coordinate, we have
\[
 \Delta_{k+1}=\Delta_k-\delta_k,\qquad
 0\le\delta_k\le\Delta_k,\qquad \Delta_1=t-s.
\]
Therefore
\[
 \Phi(U_t)-\Phi(U_s)=\sum_{k=1}^d\delta_k
                   =t-s-\Delta_{d+1}\le t-s.
\]
For $s=0$, take $U_s=0$.

The conditional distribution functions in \eqref{eq:conditional} are
measurable in their budget and coordinate arguments. Their generalized
inverses are measurable as well: the event that an inverse is at most
$y$ is expressed through the corresponding distribution function at
$y$. Thus the finite recursion is measurable in the budgets and the
uniform variables. Equal budgets use the same quantiles and give the
identity coupling.
\end{proof}

Adding common independent signs and applying Lemma~\ref{lem:sign} yields
\begin{equation}\label{eq:budgetcx}
 X_s\cx X_t\qquad(0\le s\le t).
\end{equation}
The same construction provides a measurable downward correction from an
already sampled vector at budget $t$ to the law at budget $s$.
One way to see this is to recover the successive uniform variables by
applying the conditional distribution functions to the sampled energies,
and then run the smaller-budget recursion with those uniforms.
The conditional laws have densities, so this quantile recovery is valid
almost surely. Values on null boundary cases may be defined arbitrarily.

\section{An independent reference and an exact slack identity}\label{sec:reference}

\subsection{Softening the constraint}
For each $\tau>0$, let $V_\tau$ have the product density
\eqref{eq:product}. The parameter $\tau$ penalizes the total energy:
\[
 f_{V_\tau}(v)=\frac{1}{\mathcal Z_\tau}
                e^{-\lambda\sum_i|v_i|-\tau\Phi(v)}.
\]
This density is integrable. Indeed, a convex strictly increasing function
that vanishes at zero has an at-least-linear lower bound at infinity.
Each marginal therefore has exponential decay or faster, and all its
moments are finite.

The product law is easier to analyze than $\mu_E$, but its energy is not
fixed. To express that randomness in a useful way, independently sample
an exponential variable $H_\tau$ of rate $\tau$, meaning that
$\PP(H_\tau>h)=e^{-\tau h}$ for $h\ge0$, and set
\begin{equation}\label{eq:slack}
 S_\tau=\Phi(V_\tau)+H_\tau.
\end{equation}
We call $H_\tau$ the slack and $S_\tau$ the random budget.

\begin{lemma}[Exact disintegration]\label{lem:slack}
Let $W=W_{\{1,\ldots,d\}}$. Then a version of the conditional law is
\begin{equation}\label{eq:disintegration}
 \Law(V_\tau\mid S_\tau=s)=\mu_s,
 \qquad
 f_\tau(s)=\frac{W(s)e^{-\tau s}}{L(\tau)},\quad s>0,
\end{equation}
where $L(\tau)=\int_0^\infty W(u)e^{-\tau u}\,du$.
\end{lemma}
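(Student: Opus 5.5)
The plan is to compute the joint density of $(V_\tau,S_\tau)$ directly and read off both the marginal of $S_\tau$ and the conditional law of $V_\tau$. By independence of $V_\tau$ and $H_\tau$, the pair $(V_\tau,H_\tau)$ has density
\[
 \frac{1}{\mathcal Z_\tau}e^{-\lambda\sum_i|v_i|-\tau\Phi(v)}\cdot\tau e^{-\tau h}\ind_{\{h\ge0\}}
\]
on $\R^d\times\R$. The map $(v,h)\mapsto(v,s)$ with $s=\Phi(v)+h$ has unit Jacobian for fixed $v$. Hence $(V_\tau,S_\tau)$ has density
\[
 \frac{\tau}{\mathcal Z_\tau}\,e^{-\lambda\sum_i|v_i|}\,e^{-\tau\Phi(v)}e^{-\tau(s-\Phi(v))}\ind_{\{\Phi(v)\le s\}}
 =\frac{\tau}{\mathcal Z_\tau}\,e^{-\tau s}\,e^{-\lambda\sum_i|v_i|}\ind_{\{\Phi(v)\le s\}} .
\]
The key cancellation is that the tilt $e^{-\tau\Phi(v)}$ is absorbed exactly by the slack, leaving the factor $e^{-\tau s}$, which does not depend on $v$. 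I would verify this by a Fubini computation against test functions $g(v)k(s)$, so that no smoothness of $\Phi$ is needed.

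Integrating in $v$ gives the marginal density of $S_\tau$:
\[
 \frac{\tau}{\mathcal Z_\tau}e^{-\tau s}Z_s, \qquad Z_s=\int e^{-\lambda\sum|x_i|}\ind_{\{\Phi(x)\le s\}}dx .
\]
By splitting $\R^d$ into orthants and using unconditionality, $Z_s=2^dW(s)$, where $W$ is the positive-orthant weighted volume from \eqref{eq:W}.

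Dividing the joint density by the marginal yields, for each $s>0$, the density $Z_s^{-1}e^{-\lambda\sum|v_i|}\ind_{\{\Phi(v)\le s\}}$, which is exactly $\mu_s$. This gives the first identity in \eqref{eq:disintegration}. The measurability in $s$ required of a version of the conditional law is clear from this explicit kernel, since it is a ratio of jointly measurable functions. The set where $Z_s=0$ is only $s\le0$ and is null.

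For the normalisation, integrate the marginal over $s$. This gives $\mathcal Z_\tau=\tau 2^d L(\tau)$. The same identity also follows from writing $e^{-\tau\Phi(v)}=\tau\int_{\Phi(v)}^\infty e^{-\tau s}ds$ and applying Fubini. It shows in particular that $L(\tau)<\infty$, because $\mathcal Z_\tau<\infty$ was established above. Substituting this into the marginal gives $f_\tau(s)=W(s)e^{-\tau s}/L(\tau)$.

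The only delicate point is bookkeeping: the change of variables and the factor $2^d$ from the orthant decomposition. There is no genuine analytic obstacle, since everything is nonnegative and Tonelli applies throughout.
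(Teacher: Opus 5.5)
Your proposal is correct and follows essentially the same route as the paper's proof: the change of variables $s=\Phi(v)+h$ with unit Jacobian, the exact cancellation of $e^{-\tau\Phi(v)}$ against the slack, and the identity $Z_s=2^dW(s)$, which identifies both the conditional law $\mu_s$ and the marginal density of $S_\tau$. Your additions are consistent refinements of that same argument: the Fubini check against test functions $g(v)k(s)$, the normalisation $\mathcal Z_\tau=\tau 2^d L(\tau)$, and the observation that this identity already gives $L(\tau)<\infty$.
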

\begin{proof}
In the joint density of $(V_\tau,H_\tau)$, make the substitution
$s=\Phi(v)+h$. The Jacobian in the slack variable is one. The result is
\begin{align*}
 f_{V_\tau,S_\tau}(v,s)
 &=\frac{\tau}{\mathcal Z_\tau}
   e^{-\lambda\sum_i|v_i|-\tau\Phi(v)}
   e^{-\tau(s-\Phi(v))}\ind_{\{\Phi(v)\le s\}}\\
 &=\frac{\tau}{\mathcal Z_\tau}
   e^{-\tau s}e^{-\lambda\sum_i|v_i|}
   \ind_{\{\Phi(v)\le s\}}.
\end{align*}
For fixed $s$, the dependence on $v$ is exactly the unnormalized density
of $\mu_s$. Integrating in $v$ gives $Z_s=2^dW(s)$ and hence a density
of $S_\tau$ proportional to $W(s)e^{-\tau s}$.
Normalization gives \eqref{eq:disintegration}.
\end{proof}

All the integrals in this identity are finite. For example,
\[
 W(s)\le\prod_{i=1}^d\psi_i(s)\le C(1+s)^d,
\]
where $C$ may depend on the functions $\phi_i$.
The second bound follows from concavity of each $\psi_i$ and its value
at one. The exponential factor makes $L(\tau)$ finite for every $\tau>0$.

\subsection{Why the slack matters}
Conditioning on $\Phi(V_\tau)=s$ would give a law on an energy surface,
not the desired law throughout the sublevel set. In contrast, conditioning
on $\Phi(V_\tau)+H_\tau=s$ leaves all points with $\Phi(v)\le s$
available. The two exponential energy terms cancel exactly.

Thus the relation between the product law and the constrained laws is
not an approximation:
\[
 \Law(V_\tau)=\int_0^\infty\mu_s\,f_\tau(s)\,ds.
\]
Moreover, increasing $\tau$ simply tilts the one-dimensional budget
density towards smaller values. This is the source of a calibration
that works uniformly in the dimension and in the Orlicz functions.

\section{Calibration: a median and a rare correction event}\label{sec:calibration}

\subsection{Choosing the initial parameter}
Choose $\tau_0>0$ so that
\begin{equation}\label{eq:median}
 \PP(S_{\tau_0}\le E)=\frac12.
\end{equation}
We justify existence rather than treating it as an implicit regularity
assumption. From \eqref{eq:disintegration}, the probability is continuous
in $\tau>0$, by dominated convergence on compact parameter intervals.
Increasing $\tau$ multiplies the density by a decreasing exponential
function of $s$, so this probability is nondecreasing.

As $\tau\downarrow0$, the pathwise inequality $S_\tau\ge H_\tau$ gives
\[
 \PP(S_\tau\le E)\le1-e^{-\tau E}\longrightarrow0.
\]
As $\tau\to\infty$, the probability tends to one. To verify the latter
claim directly, let
$A=\int_{E/4}^{E/2}W(s)\,ds>0$. For $\tau\ge1$,
\begin{align*}
 L(\tau)&\ge e^{-\tau E/2}A,\\
 \int_E^\infty W(s)e^{-\tau s}\,ds
 &\le e^{-(\tau-1)E}\int_E^\infty W(s)e^{-s}\,ds.
\end{align*}
The ratio tends to zero. The intermediate value theorem now proves
\eqref{eq:median}. Since $S_{\tau_0}$ has a density, its upper tail at
$E$ also has probability $1/2$.

\subsection{Increasing the parameter makes correction rare}
\begin{lemma}[Rare-budget estimate]\label{lem:rare}
For every $M\ge1$,
\begin{equation}\label{eq:rare}
 \PP(S_{M\tau_0}>E)\le\eta_M,
 \qquad \eta_M=2(2/3)^{M-1}.
\end{equation}
\end{lemma}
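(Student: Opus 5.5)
The proof will work entirely with the one-dimensional density of the random budget from Lemma~\ref{lem:slack}. Put $g(s)=W(s)e^{-\tau_0 s}$. By \eqref{eq:disintegration}, $g/L(\tau_0)$ is the density of $S_{\tau_0}$. Since $W$ is log-concave by \eqref{eq:Wproperties}, $g$ is log-concave. By \eqref{eq:median}, $E$ is its median. For $M\ge1$ and $k=M-1$, the density of $S_{M\tau_0}$ is proportional to $g(s)e^{-k\tau_0 s}$. Hence
\[
 \PP(S_{M\tau_0}>E)\le\frac{\int_E^\infty g(s)e^{-k\tau_0 s}\,ds}{\int_0^E g(s)e^{-k\tau_0 s}\,ds}.
\]
The numerator is at most $\tfrac12 L(\tau_0)e^{-k\tau_0E}$, by the median property. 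So it suffices to prove
\[
 \E\bigl[e^{k\tau_0(E-S_{\tau_0})}\ind_{\{S_{\tau_0}\le E\}}\bigr]\ge\tfrac14(3/2)^k .
\]
For $k=0$ the target bound $\eta_1=2$ is trivial, so only $k\ge1$ matters. In words, we must show that, conditionally on $S_{\tau_0}\le E$, the deficit $\tau_0(E-S_{\tau_0})$ is not concentrated near $0$. Its exponential moments must grow at least like $(3/2)^k$.

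To get this spread I will use the slack structure rather than only the shape of $g$. Write $S_{\tau_0}=\Phi(V_{\tau_0})+H_{\tau_0}$ with $H_{\tau_0}$ independent and exponential of rate $\tau_0$. The monotonicity of $W$ from \eqref{eq:Wproperties} enters as follows. Because $W$ is nondecreasing, the density of $S_{\tau_0}$ restricted to $[0,E]$ dominates an exponential profile from the left. Concretely, for $0\le x\le s\le E$ we have $g(s-x)\ge W(s-x)e^{-\tau_0 s}$, and log-concavity lets us compare $W(s-x)$ with $W(s)$.

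I plan an inductive route, which matches the geometric form of $\eta_M$. Set $R_k$ equal to the ratio of the upper to the lower mass at tilt $k$. The goal is the one-step bound $R_{k+1}\le\tfrac23R_k$, up to the initial factor. Passing from $k$ to $k+1$ multiplies the upper mass by at most $e^{-\tau_0E}$. It multiplies the lower mass by $\E[e^{-\tau_0 S}\mid S\le E]$ under the current tilt, which must be shown to be at least $\tfrac32e^{-\tau_0E}$. That means $\E[e^{\tau_0(E-S)}\mid S\le E]\ge 3/2$ at every tilt $\tau\ge\tau_0$.

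I intend to get this from the exponential slack. At tilt $\tau$, conditionally on the energy $\Phi(V_\tau)$, the slack is exponential of rate $\tau\ge\tau_0$. On the event $S\le E$, it is a truncated exponential. A truncated exponential of rate at least $\tau_0$ on an interval $[0,c]$ has $\E e^{\tau_0(c-H)}$ bounded below by the corresponding uniform value. When $c$ is comparable to $1/\tau_0$, that value is about $(e^{\tau_0 c}-1)/(\tau_0 c)$. When $c$ is small, the gain must instead come from $\Phi$ being well below $E$.

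The main obstacle is exactly this uniform lower bound $3/2$: a deficit bound that does not depend on $W$ and holds at every tilt. If the slack argument fails in the regime where the conditional mass piles up near $E$, I will fall back on log-concavity of $g$. A log-concave density with median $E$ cannot place most of its lower half in a window $[E-\delta,E]$ with $\tau_0\delta$ small. Otherwise the slope of $\log g$ just below $E$ would be large and positive, forcing the upper half to decay faster than rate $\tau_0$. That contradicts the factor $e^{-\tau_0 s}$ being the only decay available from $W$ nondecreasing. Quantifying this should give the constant $3/2$, or some universal constant greater than $1$, which suffices after adjusting the choice of $M$ later.
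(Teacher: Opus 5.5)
Your reduction is correct and is exactly the paper's tilting step. After bounding the upper mass by $\tfrac12L(\tau_0)e^{-k\tau_0E}$ via the median, the lemma follows once $\E[e^{k\tau_0(E-S_{\tau_0})}\ind_{\{S_{\tau_0}\le E\}}]\ge\tfrac14(3/2)^k$. That inequality is the whole content of the lemma, however, and the proposal does not prove it. You name it as the main obstacle and offer two tentative routes; neither closes it.

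The truncated-exponential argument breaks down, as you say yourself, when $E-\Phi(V_\tau)$ is small. Nothing in your plan controls how much mass sits there. The log-concavity fallback relies on the wrong property. A log-concave density with median $E$ can put most of its lower half in $[E-\delta,E]$, for instance a narrow bump at $E$. A large positive slope of $\log g$ below $E$ gives no lower bound on the decay above $E$. Finally, settling for ``some universal constant greater than $1$'' would not give the stated $\eta_M$.

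The missing input is a hazard bound. It uses only that $W$ is nondecreasing, not that it is log-concave. For $u\ge s$ one has $f_{\tau_0}(u)\ge f_{\tau_0}(s)e^{-\tau_0(u-s)}$. Integrating in $u$ gives $f_{\tau_0}(s)\le\tau_0\PP(S_{\tau_0}>s)$. Your slack picture gives the same bound directly, since $f_{S_{\tau_0}}(s)=\tau_0\PP(\Phi(V_{\tau_0})\le s<S_{\tau_0})$.

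Now take $h=\log(3/2)/\tau_0$, and note $h<E$ because $\tau_0E\ge\log 2$. Integrating the hazard bound $-\bigl(\log\PP(S_{\tau_0}>\cdot)\bigr)'\le\tau_0$ over $[E-h,E]$ gives $\PP(S_{\tau_0}>E-h)\le\tfrac32\cdot\tfrac12=\tfrac34$, so $\PP(S_{\tau_0}\le E-h)\ge\tfrac14$. Hence
$\E[e^{k\tau_0(E-S_{\tau_0})}\ind_{\{S_{\tau_0}\le E\}}]\ge e^{k\tau_0h}\cdot\tfrac14=\tfrac14(3/2)^k$
for every real $k\ge0$ at once, with no induction.

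Your inductive route can be rescued with the same input. The hazard bound gives $\PP(E-S_{\tau_0}>x\mid S_{\tau_0}\le E)\ge2-e^{\tau_0x}$ for $0\le x\le\log2/\tau_0$. This yields $\E[e^{\tau_0(E-S_{\tau_0})}\mid S_{\tau_0}\le E]\ge\tfrac32$. A monotone-likelihood-ratio step extends that bound to every tilt $\tau\ge\tau_0$. The route would still need a separate treatment of non-integer $M$, so the one-shot version above is simpler.
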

\begin{proof}
Write $f_0=f_{\tau_0}$ and
$\overline F_0(s)=\PP(S_{\tau_0}>s)$.
We first show that a fixed amount of probability lies a definite distance
below the median, where the distance is measured in units of $1/\tau_0$.

\proofstep{A bound on the hazard.}
Because $W$ is nondecreasing, for $u\ge s>0$,
\[
 f_0(u)\ge f_0(s)e^{-\tau_0(u-s)}.
\]
Integrating gives
\[
 \overline F_0(s)\ge\frac{f_0(s)}{\tau_0}.
\]
The hazard, namely the density divided by the remaining upper-tail
probability, therefore satisfies
\begin{equation}\label{eq:hazard}
 \frac{f_0(s)}{\overline F_0(s)}\le\tau_0.
\end{equation}

\proofstep{A quarter of the mass is separated from the median.}
Let $h=\log(3/2)/\tau_0$.
The inequalities $S_{\tau_0}\ge H_{\tau_0}$ and
$\PP(S_{\tau_0}>E)=1/2$ imply $\tau_0E\ge\log2$, so $h<E$.
Since the derivative of $-\log\overline F_0$ is the hazard, integrating
\eqref{eq:hazard} from $E-h$ to $E$ yields
\[
 \overline F_0(E-h)
 \le e^{\tau_0h}\overline F_0(E)
 =\frac34.
\]
Consequently,
\begin{equation}\label{eq:quarter}
 \PP(S_{\tau_0}\le E-h)\ge\frac14.
\end{equation}

\proofstep{Exponential tilting separates the two regions.}
Put $a=(M-1)\tau_0$. By \eqref{eq:disintegration}, the law at
$M\tau_0$ is the exponential tilt of the law at $\tau_0$:
\begin{align*}
 \PP(S_{M\tau_0}>E)
 &=\frac{\E[e^{-aS_{\tau_0}}\ind_{\{S_{\tau_0}>E\}}]}
         {\E e^{-aS_{\tau_0}}}\\
 &\le\frac{\frac12 e^{-aE}}
              {\frac14 e^{-a(E-h)}}
 =2e^{-ah}
 =2(2/3)^{M-1}.
\end{align*}
The numerator uses the median identity, and the denominator uses
\eqref{eq:quarter}.
\end{proof}

The estimate does not involve a concentration parameter for the total
energy. It needs only the monotonicity of $W$ and the exact exponential
family of budget densities. In particular, the universal exponential
decay in $M$ does not deteriorate when $d$ grows.

\section{Local correction and absorption of the residual}\label{sec:correction}

\subsection{Constructing a vector that respects the constraint}
Fix $M\ge1$, and abbreviate
\[
 \tau=M\tau_0,\qquad V=V_\tau,\qquad S=S_\tau.
\]
Conditionally on $S=s$, the magnitude of $V$ has the law of $|X_s|$.
Apply Proposition~\ref{prop:budget} to correct this magnitude down to
budget $\min(s,E)$. Preserve the original independent signs, and call
the resulting vector $Q$. Define
\begin{equation}\label{eq:D}
 D=V-Q.
\end{equation}
The correction uses only magnitudes and budgets, never the signs.

\begin{proposition}[The corrected vector]\label{prop:correction}
The construction satisfies
\begin{equation}\label{eq:local}
 \begin{gathered}
 |Q_i|\le|V_i|,\qquad \Phi(Q)\le E,
 \qquad Q=V\ \text{on }\{S\le E\},\\
 0\le\Phi(V)-\Phi(Q)\le(S-E)_+.
 \end{gathered}
\end{equation}
Moreover,
\begin{equation}\label{eq:Qlaw}
 \Law(Q\mid S=s)=\mu_{\min(s,E)},\qquad Q\cx X.
\end{equation}
\end{proposition}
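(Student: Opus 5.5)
The plan is to condition on the random budget and use the downward version of the monotone budget coupling. By Lemma~\ref{lem:slack}, conditionally on $S=s$ the vector $V$ has law $\mu_s$. Hence its magnitude $|V|$ has law $\Law(|X_s|)$, and its signs are independent symmetric signs independent of $|V|$; this is the representation \eqref{eq:signrepresentation} applied under the conditional law.

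\textbf{Constructing $Q$.} Write $s'=\min(s,E)\le s$. Apply the downward correction described after \eqref{eq:budgetcx}. That is, recover the successive uniform variables from the sampled energies of $|V|$ through the conditional distribution functions \eqref{eq:conditional}, then run the budget-$s'$ recursion with the same uniforms. This produces a magnitude vector $U'$ such that the pair $(U',|V|)$ is distributed, given $S=s$, as the coupling of Proposition~\ref{prop:budget} with budgets $s'\le s$. Measurability in $s$ is part of that proposition, so the construction can be done jointly in $(V,H)$. Set $Q=\eps\odot U'$ with the signs of $V$.

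\textbf{The pathwise properties in \eqref{eq:local}.} These follow from \eqref{eq:budget}:
\begin{itemize}
\item $U'\le|V|$ gives $|Q_i|\le|V_i|$.
\item $0\le\Phi(V)-\Phi(Q)\le s-s'=(s-E)_+$.
\item $\Phi(Q)\le E$ holds because $U'$ lies almost surely in the support of $\mu_{s'}$.
\item On $\{S\le E\}$ we have $s'=s$, and the coupling is the identity, so $Q=V$.
\end{itemize}
The identity claim needs care. The recovered uniforms reproduce the original energies almost surely, because the conditional laws have densities and $F^{-1}(F(y))=y$ holds off a null set. So one recursion with equal budgets returns the same point almost surely.

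\textbf{The law of $Q$.} Given $S=s$, the recovered uniforms are independent uniforms; this is the probability integral transform, applied sequentially to conditional laws with densities. Therefore $U'\sim\Law(|X_{s'}|)$. The signs are independent of $(|V|,S)$ and hence of $U'$. So $\Law(Q\mid S=s)=\mu_{\min(s,E)}$.

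\textbf{Convex order $Q\cx X$.} Fix a convex $f$. By \eqref{eq:budgetcx} with $s'\le E$, we have $\E f(X_{s'})\le\E f(X_E)$. Integrating over the law of $S$ gives
\[
 \E f(Q)=\int\E f(X_{\min(s,E)})\,f_\tau(s)\,ds\le\E f(X).
\]
Integrability is automatic since everything is supported in a compact set.

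\textbf{Main obstacle.} The delicate step is justifying the downward correction rigorously: the quantile recovery must return uniforms that are independent and uniform given the budget, and must reproduce $V$ exactly on $\{S\le E\}$. I would handle this by proving a one-coordinate statement and then inducting over the coordinates in the order used in Proposition~\ref{prop:budget}. The one-coordinate statement is that for a law with a density, $u=F(Y)$ is uniform and $F^{-1}(u)=Y$ almost surely.
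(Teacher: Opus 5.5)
Your proposal is correct and follows the paper's own proof. The pathwise properties come from the ordered budget coupling of Proposition~\ref{prop:budget}, including its identity property at equal budgets; the conditional law is $\mu_{\min(s,E)}$; and $Q\cx X$ follows from \eqref{eq:budgetcx} integrated against the budget density. Your sequential quantile-recovery argument spells out the justification that the paper only sketches in its remark after \eqref{eq:budgetcx}.
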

\begin{proof}
Every pathwise assertion follows from the ordered budget coupling,
including its identity property when the two budgets coincide.
The conditional marginal of the corrected vector is exactly
$\mu_{\min(s,E)}$. This law is dominated by $\mu_E$ in the magnitude
coupling of Proposition~\ref{prop:budget}. Adding signs and using
Lemma~\ref{lem:sign}, followed by integration in $s$, gives $Q\cx X$.
\end{proof}

Two distinctions are worth keeping in mind. First, $Q$ does not generally
have the same law as $X$; it is a mixture of laws with budgets at most
$E$. Convex-order domination is enough. Second, the event we control is
$\{S>E\}$, not just $\{\Phi(V)>E\}$. The slack may make $S>E$ even when
$V$ already lies inside the constraint. The construction is designed to
control the law of $Q$, not to be a nearest-point projection.

\subsection{The same coupling controls all coordinate caps}
For $\beta\in(0,\infty)^d$, write
\[
 (\clip_\beta x)_i=\operatorname{sgn}(x_i)\min(|x_i|,\beta_i).
\]
\begin{corollary}[All-cap convex-order comparisons]\label{cor:caps}
For every $\beta\in(0,\infty)^d$ and every $L\ge1$,
\begin{equation}\label{eq:caps}
 \clip_\beta Q\cx\clip_\beta X,
 \qquad
 \clip_{L\beta}Q\cx L\clip_\beta X.
\end{equation}
\end{corollary}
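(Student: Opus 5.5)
The plan is to reduce both inequalities in \eqref{eq:caps} to Lemma~\ref{lem:sign}, using the magnitude coupling already built into $Q$. Clipping acts coordinatewise on magnitudes and leaves signs alone: $|(\clip_\beta x)_i|=\min(|x_i|,\beta_i)$, with the sign of $x_i$ unchanged. In particular, $\clip_\beta(\eps\odot u)=\eps\odot\min(u,\beta)$ for $u\ge0$, where the minimum is coordinatewise. Each conclusion therefore becomes a coordinatewise comparison of clipped magnitudes under a common sign vector, followed by an integration over the budget variable.

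For the first inequality, condition on $S=s$. By \eqref{eq:Qlaw}, the conditional law of $Q$ is $\mu_{\min(s,E)}$. Proposition~\ref{prop:budget}, applied with budgets $\min(s,E)\le E$, gives a coupling of magnitudes $U_{\min(s,E)}\le U_E$ coordinatewise. Since $u\mapsto\min(u,\beta_i)$ is nondecreasing, we get $\min(U_{\min(s,E)},\beta)\le\min(U_E,\beta)$ coordinatewise. Attaching common independent signs $\eps$ and applying Lemma~\ref{lem:sign} to a convex $f$ gives
\[
\E f\bigl(\clip_\beta X_{\min(s,E)}\bigr)\le\E f\bigl(\clip_\beta X\bigr).
\]
Integrating in $s$ against the law of $S$ then yields $\clip_\beta Q\cx\clip_\beta X$. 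Integrability is not an issue, because clipped vectors are bounded.

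For the second inequality, I would first establish the deterministic scalar bound $\min(u,L\beta_i)\le L\min(u,\beta_i)$ for $u\ge0$ and $L\ge1$. If $u\le\beta_i$, the left side is at most $u\le Lu$. If $u>\beta_i$, the left side is at most $L\beta_i$, which is the right side. Combining this with the same monotone coupling gives
\[
\min(U_{\min(s,E)},L\beta)\le\min(U_E,L\beta)\le L\min(U_E,\beta)
\]
coordinatewise. Lemma~\ref{lem:sign} with common signs then gives $\clip_{L\beta}X_{\min(s,E)}\cx L\clip_\beta X$, and integrating over $s$ finishes the argument.

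The main point needing care is the mixture step. The conditional comparisons hold for each fixed $s$, and we need convex order for the mixture $Q$. This works because convex order is preserved under mixtures whose right-hand law does not depend on the mixing variable. Here the right-hand side, $\clip_\beta X$ or $L\clip_\beta X$, is fixed, so $\E f(\clip_\beta Q)=\int\E f(\clip_\beta X_{\min(s,E)})\,\PP_S(ds)$ is bounded by the fixed quantity, and both conclusions follow.
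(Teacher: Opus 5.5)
Your proof is correct and follows the paper's argument. The ingredients match: the monotone magnitude coupling $|Q|\le|X|$ with common independent signs, the two coordinatewise clipped inequalities (including $\min(u,L\beta_i)\le L\min(u,\beta_i)$), and Lemma~\ref{lem:sign}; the only difference is that you write out the conditioning on $S=s$ and the integration over the budget, which the paper leaves implicit when it invokes the coupling from the proof of Proposition~\ref{prop:correction}.
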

\begin{proof}
Use the magnitude coupling $|Q|\le|X|$ available in the proof of
Proposition~\ref{prop:correction}, with common independent signs.
Coordinatewise,
\[
 \min(|Q_i|,\beta_i)\le\min(|X_i|,\beta_i),
\]
and
\[
 \min(|Q_i|,L\beta_i)\le L\min(|X_i|,\beta_i).
\]
Apply sign contraction to each comparison.
\end{proof}
Here $L$ is a cap-dilation parameter; it may be taken equal to the
calibration factor $M$. These statements come from the underlying ordered
coupling, not from a general rule that nonlinear clipping preserves
convex order.

\subsection{The residual estimate}
\begin{proposition}[Small residual in every expected seminorm]\label{prop:residual}
For every seminorm $q$ on $\R^d$,
\begin{equation}\label{eq:residual}
 \E q(D)\le\theta_M\E q(V),
 \qquad
 \theta_M=C_B\sqrt2\,(2/3)^{(M-1)/2}.
\end{equation}
\end{proposition}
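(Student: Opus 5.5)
The plan is to exploit that $D$ vanishes off the rare event $B=\{S>E\}$ (Proposition~\ref{prop:correction} gives $Q=V$ on $\{S\le E\}$), and then to bound $q(D)$ on $B$ by a quantity that involves only the log-concave product vector $V$. Once that is done, \eqref{eq:raremean} together with Lemma~\ref{lem:rare} gives the constant $C_B\sqrt{\eta_M}=C_B\sqrt2\,(2/3)^{(M-1)/2}=\theta_M$, which is exactly the stated $\theta_M$. So the whole proof reduces to showing
\[
 \E[\ind_B\,q(D)]\le\E[\ind_B\,q(V)].
\]
The naive bound $q(D)\le q(V)+q(Q)$ would lose a factor $2$ and would not match $\theta_M$.

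\textbf{Step 1: comparison of $q(D)$ with $q(V)$ on $B$.} Coordinatewise, $D_i=\eps_i(|V_i|-|Q_i|)$, where $\eps$ is the common sign vector. By Proposition~\ref{prop:correction}, $0\le|V_i|-|Q_i|\le|V_i|$. Conditionally on the magnitudes $(|V|,|Q|)$ and on $S$, the signs $\eps$ are independent symmetric signs. This holds because the correction uses only magnitudes and budgets, and $H_\tau$ is independent of $V$; I would state this conditional independence explicitly. Since $q$ is convex, Lemma~\ref{lem:sign} gives
\[
 \E_\eps\, q(\eps\odot(|V|-|Q|))\le\E_\eps\, q(\eps\odot|V|)
\]
pointwise in the magnitudes. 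The event $B$ is measurable with respect to $(|V|,H_\tau)$, so multiplying by $\ind_B$ and integrating yields $\E[\ind_Bq(D)]\le\E[\ind_Bq(V)]$. Here $\E q(D)=\E[\ind_Bq(D)]$ because $D=0$ off $B$.

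\textbf{Step 2: rare-event estimate.} The vector $V$ has a log-concave product density. Applying \eqref{eq:raremean} with $Y=V$ and the event $B$ (which is allowed to depend on the slack, as stated there), we get
\[
 \E[\ind_Bq(V)]\le C_B\sqrt{\PP(B)}\,\E q(V),
\]
and Lemma~\ref{lem:rare} bounds $\PP(B)\le 2(2/3)^{M-1}$.

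\textbf{Main obstacle.} The only delicate point is the measurability and conditional-independence bookkeeping in Step~1. The signs of $V$ must remain independent of $(|V|,H_\tau,|Q|)$, and the magnitudes of $Q$ must be a measurable function of $|V|$ and $S$ only. Both follow from the construction of Proposition~\ref{prop:budget}, via its downward correction through quantile recovery, and from the unconditional product density of $V$. With these in hand, everything else is a direct combination of the earlier results.
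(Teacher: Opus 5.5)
Your proposal is correct and follows the paper's proof essentially step for step. Sign contraction, applied conditionally on the magnitudes and the slack, gives $\E q(D)\le\E[\ind_B q(V)]$; Cauchy--Schwarz with Borell's bound \eqref{eq:raremean} and Lemma~\ref{lem:rare} then give the constant $\theta_M$. Your explicit bookkeeping, that the signs are independent of $(|V|,H_\tau,|Q|)$, matches the paper's conditioning on all magnitudes, the slack, and any correction randomness.
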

\begin{proof}
Let $B=\{S>E\}$. Conditional on all magnitudes, the slack, and any
randomness used in the correction, write
\[
 V=\eps\odot v,\qquad Q=\eps\odot u,\qquad
 D=\eps\odot(v-u),
\]
where $0\le u\le v$ coordinatewise. The signs are still independent and
symmetric. Moreover, $v-u=0$ off $B$. Thus Lemma~\ref{lem:sign} gives,
conditionally,
\[
 \E_\eps q\bigl(\eps\odot(v-u)\bigr)
 \le\ind_B\,\E_\eps q(\eps\odot v).
\]
After averaging the remaining variables, this becomes
\begin{equation}\label{eq:signresidual}
 \E q(D)\le\E[\ind_Bq(V)].
\end{equation}
Now use Cauchy--Schwarz, the log-concavity of $V$, and
Lemma~\ref{lem:rare}:
\begin{align*}
 \E q(D)
 &\le\sqrt{\PP(B)}\,\bigl(\E q(V)^2\bigr)^{1/2}\\
 &\le C_B\sqrt{\eta_M}\,\E q(V)
 =C_B\sqrt2\,(2/3)^{(M-1)/2}\E q(V).
\end{align*}
\end{proof}

\begin{remark}[Why the conditional sign average cannot be omitted]
The pointwise implication $|r_i|\le|v_i|\Rightarrow q(r)\le q(v)$ is
false for an arbitrary seminorm. For example, take
$q(x)=|x_1-x_2|$, $v=(1,1)$, and $r=(1,0)$. Then $q(v)=0$ but
$q(r)=1$. The valid statement used above is the contraction after
averaging independent signs. Also, no independence between $B$ and $V$
is assumed in \eqref{eq:signresidual} or in Cauchy--Schwarz.
\end{remark}

\subsection{Absorbing the error}
Choose a universal $M$ large enough that $\theta_M\le1/2$.
For definiteness, the choice
\begin{equation}\label{eq:M}
 M=1+\left\lceil\frac{\log(8C_B^2)}{\log(3/2)}\right\rceil
\end{equation}
suffices. We do not need an optimized value.
The triangle inequality, $Q\cx X$, and \eqref{eq:residual} imply
\begin{align*}
 \E q(V)
 &\le\E q(Q)+\E q(D)\\
 &\le\E q(X)+\theta_M\E q(V).
\end{align*}
Moving the last term to the left gives
\begin{equation}\label{eq:absorb}
 \E q(V)\le\frac{1}{1-\theta_M}\E q(X)\le2\E q(X).
\end{equation}
This is the second half of Theorem~\ref{thm:comparison}.
The choice of $M$ is the same for every seminorm, every dimension, and
every set of Orlicz functions.

\section{Convex order in the opposite direction}\label{sec:reverse}
The reference at parameter $M\tau_0$ now has a small expected correction.
We must still verify that it retains the moments required for separation.
This is done in two stages: use the median at $\tau_0$, then compare the
two product distributions.

\begin{lemma}[Reverse comparison]\label{lem:reverse}
For every $M\ge1$,
\begin{equation}\label{eq:reverse}
 X\cx2V_{\tau_0}\cx2MV_{M\tau_0}.
\end{equation}
\end{lemma}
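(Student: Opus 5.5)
The plan is to prove the two convex-order relations in \eqref{eq:reverse} separately and then chain them. Convex order is transitive and is preserved under multiplication by a positive constant.

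\proofstep{First relation: $X\cx 2V_{\tau_0}$.}
Work at parameter $\tau_0$ and write $V=V_{\tau_0}$, $S=S_{\tau_0}$, $B=\{S>E\}$. By \eqref{eq:median}, $\PP(B)=1/2$.

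By Lemma~\ref{lem:slack}, conditionally on $S=s$ the magnitude of $V$ has the law of $|X_s|$. On $B$ I apply the downward correction of Proposition~\ref{prop:budget}, from budget $s$ to budget $E$, keeping the signs of $V$. This gives a vector $X'$ defined on $B$ such that, conditionally on $B$:
\begin{itemize}
\item $X'$ has law $\mu_E$;
\item $|X'|\le|V|$ coordinatewise;
\item $X'$ and $V$ share the same independent signs.
\end{itemize}

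Now fix a convex $f$. Since $x=\tfrac12(2x)+\tfrac12\cdot 0$, convexity gives
\[
 \E f(X)\le\tfrac12\E f(2X)+\tfrac12 f(0)
 =\tfrac12\E[f(2X')\mid B]+\tfrac12 f(0).
\]
The function $x\mapsto f(2x)$ is convex. So Lemma~\ref{lem:sign}, applied conditionally on the magnitudes and then averaged over $B$, gives
\[
 \E[f(2X')\mid B]\le\E[f(2V)\mid B].
\]
For the complementary event, the law of $V$ given $B^c$ is a mixture of the unconditional laws $\mu_s$, so it is symmetric and hence centered. Jensen's inequality therefore gives
\[
 f(0)\le\E[f(2V)\mid B^c].
\]
Combining these with $\PP(B)=\PP(B^c)=1/2$ yields $\E f(X)\le\E f(2V)$.

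\proofstep{Second relation: $V_{\tau_0}\cx MV_{M\tau_0}$.}
Both vectors have independent symmetric coordinates. Convex order tensorizes over independent coordinates: condition on all other coordinates and replace one coordinate at a time, using that $f$ is convex in each variable separately. So it suffices to prove the one-dimensional comparison for each $i$.

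The density of $MV_{M\tau_0,i}$ is proportional to
\[
 \exp\{-\lambda|v|/M-M\tau_0\phi_i(|v|/M)\}.
\]
Its ratio to $f_{i,\tau_0}$ is therefore proportional to
\[
 \exp\{\lambda|v|(1-1/M)+\tau_0(\phi_i(|v|)-M\phi_i(|v|/M))\}.
\]
Since $\phi_i$ is convex with $\phi_i(0)=0$, the map $r\mapsto\phi_i(r)/r$ is nondecreasing. Hence $\phi_i(r)-M\phi_i(r/M)=r\bigl(\phi_i(r)/r-\phi_i(r/M)/(r/M)\bigr)$ is nondecreasing in $r\ge0$. The likelihood ratio is thus nondecreasing in $|v|$.

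Monotone likelihood ratio gives $|V_{\tau_0,i}|\st M|V_{M\tau_0,i}|$. The quantile coupling of the magnitudes, combined with common signs and Lemma~\ref{lem:sign}, then yields convex order in each coordinate.

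\proofstep{Main obstacle.}
I expect the delicate step to be the first relation. The law of $V_{\tau_0}$ dominates $X$ only on the half-probability event $B$. The factor $2$ is exactly what absorbs the complementary half, and this works only because $V$ restricted to $B^c$ remains centered. Some care is also needed to ensure that the downward correction on $B$ produces exactly the law $\mu_E$ for $X'$, not merely a dominated law.
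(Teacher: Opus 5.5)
Your overall route is the paper's. The first relation uses the median calibration with the halving inequality $f(x)\le\frac12 f(2x)+\frac12 f(0)$, and the second uses a monotone likelihood ratio in each coordinate. The first relation is correct as you argue it. Jensen's inequality for the centered conditional law on $\{S\le E\}$ replaces the paper's observation that the sign average $\bar f$ is minimized at $0$. You can also sidestep your worry about an exact downward correction by integrating $X_E\cx X_s$, $s>E$, from \eqref{eq:budgetcx} over $B$.

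The gap is in the second relation. Put $g(r)=\phi_i(r)/r$. You infer that $\phi_i(r)-M\phi_i(r/M)=r\bigl(g(r)-g(r/M)\bigr)$ is nondecreasing because $g$ is, and this does not follow. The factorization only shows that the quantity is nonnegative. The bracket need not be monotone even for convex $\phi_i$: for $\phi(r)=\max(r,2r-1)$ and $M=2$ it equals $1/r$ on $[2,\infty)$. In fact monotonicity of $\phi(r)/r$ alone cannot give the conclusion. Take $\phi(r)=r$ on $[0,1]$, $3r-2$ on $[1,2]$, and $2r$ on $[2,\infty)$. It is strictly increasing with $\phi(r)/r$ nondecreasing, yet $\phi(2)-2\phi(1)=2>0=\phi(4)-2\phi(2)$. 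This monotonicity is exactly what gives the likelihood-ratio ordering, and hence $|V_{\tau_0,i}|\st M|V_{M\tau_0,i}|$. So the key step of the second relation is not established as written.

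The repair has to use convexity in full strength, as the paper does. $\phi_i$ is locally absolutely continuous, and the almost-everywhere derivative of $r\mapsto\phi_i(r)-M\phi_i(r/M)$ is $\phi_i'(r)-\phi_i'(r/M)\ge0$, because $\phi_i'$ is nondecreasing. Equivalently, for $0\le r<r'$ the chord slope of $\phi_i$ over $[r/M,r'/M]$ is at most its chord slope over $[r,r']$: raise the right endpoint, then the left. This is the inequality $M\bigl(\phi_i(r'/M)-\phi_i(r/M)\bigr)\le\phi_i(r')-\phi_i(r)$.

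With this fix, the rest of your argument goes through. Your one-coordinate-at-a-time tensorization of convex order is a valid substitute for the paper's product of independent quantile couplings followed by a single application of Lemma~\ref{lem:sign}.
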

\begin{proof}
\proofstep{The median mixture dominates half of the desired convex expectation.}
Let $f:\R^d\to\R$ be convex, and let $\bar f$ be its sign average.
All vectors in the assertion are unconditional, so their expectations
are unchanged by replacing $f$ with $\bar f$. This function is convex,
unconditional, and minimized at zero.

When $s\ge E$, \eqref{eq:budgetcx} gives
$\E\bar f(X_s)\ge\E\bar f(X)$. For $s<E$, the lower bound
$\bar f(0)$ is sufficient. Disintegration and the median identity therefore
yield
\begin{equation}\label{eq:half}
 \E\bar f(V_{\tau_0})
 \ge\frac12\E\bar f(X)+\frac12\bar f(0).
\end{equation}
Convexity gives
$\bar f(x)\le\tfrac12\bar f(2x)+\tfrac12\bar f(0)$.
Combining this with \eqref{eq:half}, we obtain
\[
 \E\bar f(2V_{\tau_0})\ge\E\bar f(X),
\]
which proves $X\cx2V_{\tau_0}$.

\proofstep{A universal dilation compensates for increasing the parameter.}
Put $\tau=M\tau_0$ and consider one positive coordinate. The density of
$M|V_{\tau,i}|$, divided by the density of $|V_{\tau_0,i}|$, is a positive
constant times
\begin{equation}\label{eq:scaledratio}
 \exp\left\{\lambda(1-1/M)x
       +\tau_0\phi_i(x)-M\tau_0\phi_i(x/M)\right\},
 \qquad x>0.
\end{equation}
Its logarithm is locally absolutely continuous. Its almost-everywhere
derivative equals
\[
 \lambda(1-1/M)
   +\tau_0\bigl(\phi_i'(x)-\phi_i'(x/M)\bigr)\ge0,
\]
because a convex function has a nondecreasing derivative.
The ratio is therefore nondecreasing. The one-crossing argument used in
Lemma~\ref{lem:one} implies
\begin{equation}\label{eq:coordinatecooling}
 |V_{\tau_0,i}|\st M|V_{\tau,i}|.
\end{equation}
Use independent common-quantile couplings in the different coordinates;
their product preserves the independent-coordinate marginal laws.
After adding common independent signs, Lemma~\ref{lem:sign} gives
$V_{\tau_0}\cx MV_\tau$. Dilation by two completes
\eqref{eq:reverse}.
\end{proof}

For convex functions with possibly infinite expectations, subtract an
affine minorant first. Alternatively, after sign averaging use the
nonnegative function $\bar f-\bar f(0)$. The preceding inequalities are
then meaningful in the extended sense. The moment applications below
are all finite.

The reverse comparison together with \eqref{eq:absorb} proves
Theorem~\ref{thm:comparison}. In particular, for $V=V_{M\tau_0}$,
\begin{equation}\label{eq:moments}
 \norm{\ip{h}{X}}_p\le2M\norm{\ip{h}{V}}_p,
 \qquad h\in\R^d,\quad p\ge1.
\end{equation}
Notice what was \emph{not} needed: no estimate of the form
$\phi_i(Mx)\le C(M)\phi_i(x)$ appears. Only monotonicity of the derivative
of a convex function is used in \eqref{eq:scaledratio}.

\section{Completion of the Sudakov minoration proof}\label{sec:completion}
\begin{proof}[Proof of Theorem~\ref{thm:main}]
Let $T,p,a$ satisfy Definition~\ref{def:smp} for $X$.
The comparison \eqref{eq:moments} yields
\[
 \norm{\ip{t-u}{V}}_p\ge\frac{a}{2M}
 \qquad(t,u\in T,\ t\ne u).
\]
The coordinates of $V$ are independent, symmetric, and log-concave.
Their minoration theorem gives
\[
 W_V(T)\ge\frac{\kappa_{\rm ind}}{2M}\,a.
\]
Apply \eqref{eq:absorb} to the range seminorm $q_T$ from
\eqref{eq:qT}. By \eqref{eq:widthseminorm},
\[
 2W_V(T)=\E q_T(V)\le2\E q_T(X)=4W_X(T).
\]
Therefore
\begin{equation}\label{eq:final}
 \boxed{\qquad
 W_X(T)\ge\frac12W_V(T)
          \ge\frac{\kappa_{\rm ind}}{4M}\,a.
 \qquad}
\end{equation}
The $M$ in \eqref{eq:M} is universal, which proves the theorem with
$\kappa=\kappa_{\rm ind}/(4M)$.
\end{proof}

\subsection*{What each part of the argument contributed}
The budget coupling provided the distributional comparison
$Q\cx X$ while respecting the constraint. The slack identity selected
the appropriate constrained law at every random budget. Median
calibration had two uses: it supplied the convex-order lower comparison
at $\tau_0$, and it made the correction event exponentially rare after
increasing the parameter. Finally, sign contraction and Borell's bound
converted rarity into a small expected seminorm.

Thus the proof does not try to dominate every residual linear moment or
every selected copy separately. It obtains exactly the two comparisons
needed in Lemma~\ref{lem:transfer}, with constants uniform in the moment
order and in the index set.

\section{Examples and concrete interpretations}\label{sec:examples}

\subsection{Uniform generalized Orlicz balls}
When $\lambda=0$, the measure is uniform on
\[
 K_E=\left\{x\in\R^d:\sum_{i=1}^d\phi_i(|x_i|)\le E\right\}.
\]
Theorem~\ref{thm:main} applies without requiring the functions to coincide.
Taking $\phi_i(u)=u^r$ for a fixed $r\ge1$ gives an ordinary
$\ell_r$ ball. These ordinary-ball cases are already contained in
\cite[Corollary~5.3]{Latala14}. The purpose of the general construction is
to avoid relying on a single radial homogeneity.

\subsection{Mixed powers: a fully explicit budget distribution}
Take
\begin{equation}\label{eq:mixedpower}
 \phi_i(u)=c_i u^{r_i},\qquad c_i>0,\quad r_i\ge1,
 \qquad \lambda=0.
\end{equation}
The exponents may differ from coordinate to coordinate and need not have
a common upper bound. Put
\[
 \alpha_i=\frac1{r_i},\qquad \alpha=\sum_{i=1}^d\alpha_i.
\]
The inverse and energy density factor are
\[
 \psi_i(y)=c_i^{-1/r_i}y^{1/r_i},\qquad
 w_i(y)=\frac{c_i^{-1/r_i}}{r_i}y^{\alpha_i-1}.
\]
Changing variables $y_i=s z_i$ in the positive-orthant volume shows
\begin{equation}\label{eq:powerW}
 W(s)=C s^\alpha
\end{equation}
for a constant $C>0$ depending on the coefficients and exponents.
Hence \eqref{eq:disintegration} becomes
\begin{equation}\label{eq:gamma}
 f_\tau(s)=\frac{\tau^{\alpha+1}}{\Gamma(\alpha+1)}
                 s^\alpha e^{-\tau s},\qquad s>0.
\end{equation}
Thus $S_\tau$ has a gamma distribution of shape $\alpha+1$ and rate
$\tau$. The same fact is visible directly: the independent energies
$c_i|V_{\tau,i}|^{r_i}$ have gamma distributions of shapes $\alpha_i$
and common rate $\tau$, while the slack contributes an additional
shape parameter one.

If $m_{\alpha+1}$ is the median of the gamma distribution with shape
$\alpha+1$ and rate one, then the calibration is simply
\[
 \tau_0=\frac{m_{\alpha+1}}{E}.
\]
For example, the two-dimensional region
$|x_1|+|x_2|^2\le E$ has $\alpha=3/2$, so the random budget has gamma
shape $5/2$.

There is even a direct budget coupling in this particular example class.
If $U_1$ is uniform on the positive part of $K_1$, set
$(U_s)_i=s^{1/r_i}(U_1)_i$. The constant Jacobian of this diagonal scaling
shows that $U_s$ has the correct law. Moreover,
\[
 \Phi(U_t)-\Phi(U_s)=(t-s)\Phi(U_1)\le t-s.
\]
For arbitrary Orlicz functions such a coordinatewise power scaling is
not available; the quantile construction replaces it.

\subsection{Quadratic--linear energies and rapidly growing energies}
Another admissible family is
\[
 \Phi(x)=\sum_{i=1}^d\bigl(a_i|x_i|+b_i x_i^2\bigr),
 \qquad a_i,b_i\ge0,\quad a_i+b_i>0.
\]
Different coordinates may be mostly linear, mostly quadratic, or a
mixture of the two. The reference density is explicit up to its
one-coordinate normalizing constants:
\[
 f_{i,\tau}(v)\propto
       e^{-(\lambda+\tau a_i)|v|-\tau b_i v^2}.
\]
Both the uniform measure and its exponential weight
$e^{-\lambda\sum_i|x_i|}$ are covered.

The functions may also grow much faster than a fixed power. For example,
\[
 \phi_i(u)=e^{u/c_i}-1,\qquad c_i>0,
\]
satisfy the assumptions, although the ratio
$\phi_i(2u)/\phi_i(u)$ is unbounded as $u\to\infty$.
This illustrates why avoiding an upper-growth hypothesis is substantive,
not just a notational simplification.

\subsection{Linear changes of coordinates and isotropic position}
No normalization of the covariance matrix was used. More generally,
if $A$ is invertible, then
\[
 \ip{t-u}{AX}=\ip{A^{\mathsf T}(t-u)}{X},\qquad
 W_{AX}(T)=W_X(A^{\mathsf T}T).
\]
The transformed index set has the same cardinality. Therefore SMP is
preserved, with the same constant, by invertible linear transformations.
In particular, one may subsequently place the vector in isotropic
position. This is a consequence of the theorem, not an extra assumption
needed to run its proof.

\section{The exact scope of the conclusion}\label{sec:scope}

\subsection{Why negative association is not the hypothesis being used}
For uniform generalized Orlicz balls, negative association of the absolute
coordinates is known \cite{PW}. The present proof uses more specific
information: the complete family of sublevel laws, an ordered coupling
between its budgets, and an exact product/slack representation.
It does not deduce these ingredients from negative association alone.
Thus the argument gives no automatic extension to an arbitrary
negatively associated unconditional log-concave vector.

The shared constraint is also genuinely scalar. With several overlapping
constraints, the remaining feasible region need not be described by one
number, and the recursion
$\Delta_{k+1}=\Delta_k-\delta_k$ has no immediate analogue.
This proof does not supply the corresponding multiconstraint coupling.

\subsection{What the residual estimate controls}
The residual in \eqref{eq:residual} is exactly $D=V-Q$, before any extra
clipping. The estimate is an unconditional expectation inequality for
every deterministic seminorm. It does not assert a small residual under
an arbitrary change of probability measure, under a Gibbs selector, or
for an empirical law of conditional residual means.

Nor does it imply the same small-error bound for
$V-\clip_{M\beta}Q$ at arbitrary caps. A one-dimensional calculation shows
why such a claim would be too strong without additional restrictions:
\[
 \E|V-\clip_{M\beta}Q|\ge\E|V|-M\beta.
\]
As $\beta\downarrow0$, the right-hand side tends to $\E|V|>0$.
Thus no upper bound by $\theta\E|V|$ with fixed $\theta<1$ can hold
uniformly for all positive caps. This does not conflict with
\eqref{eq:caps}, which compares the \emph{clipped sources}, not the full
residual after clipping.

\subsection{Assumptions that have deliberately been retained}
Finite, strictly increasing Orlicz functions ensure that the inverse
energy coordinates are well defined and that the conditional laws used
in the quantile recursion have densities. Functions with flat pieces or
infinite-valued hard barriers are not included in the theorem as stated.
The argument above does not rely on a limiting extension to those cases.

\begin{keypoint}
The proved conclusion is the universal comparison
\eqref{eq:comparison}, and therefore SMP, for the specific densities
\eqref{eq:measure}. The product vector is chosen once for the distribution,
and the residual is small in every expected seminorm. No stronger
selector-dependent estimate is needed to complete this proof.
\end{keypoint}

\section*{Notation at a glance}
\addcontentsline{toc}{section}{Notation at a glance}
\renewcommand{\arraystretch}{1.3}
\begin{center}
\begin{tabular}{@{}p{0.20\textwidth}p{0.73\textwidth}@{}}
\toprule
Symbol & Meaning\\
\midrule
$\Phi(x)$ & Total energy $\sum_i\phi_i(|x_i|)$.\\
$\mu_s$, $X_s$ & Constrained law at budget $s$ and a vector with that law.\\
$X$ & The vector of interest, $X_E$.\\
$W_Y(T)$ & Expected maximum $\E\max_{t\in T}\ip{t}{Y}$.\\
$W_J(b)$ & Positive-orthant weighted volume with coordinates $J$ and budget $b$.\\
$V_\tau$ & Independent reference with densities \eqref{eq:product}.\\
$H_\tau$, $S_\tau$ & Exponential slack and the random budget $\Phi(V_\tau)+H_\tau$.\\
$\tau_0$, $M$ & Median-calibrated parameter and its universal multiplier.\\
$Q$, $D$ & Corrected vector and residual $D=V-Q$.\\
$\cx$, $\st$ & Convex order and one-dimensional stochastic order.\\
$C_B$, $\kappa_{\rm ind}$ & Universal constants in Borell's bound and independent SMP.\\
\bottomrule
\end{tabular}
\end{center}


\clearpage
\begin{thebibliography}{99}
\addcontentsline{toc}{section}{References}
\small

\bibitem{BGMN}
F. Barthe, O. Gu\'edon, S. Mendelson and A. Naor,
\emph{A probabilistic approach to the geometry of the $\ell_p^n$-ball},
Annals of Probability \textbf{33} (2005), no.~2, 480--513.
\href{https://doi.org/10.1214/009117904000000874}{doi:10.1214/009117904000000874}.

\bibitem{Bednorz14}
W. Bednorz, \emph{Some remarks on the Sudakov minoration},
arXiv preprint, 2014, version~2.
\href{https://arxiv.org/abs/1404.6045v2}{arXiv:1404.6045v2}.

\bibitem{BL}
W. Bednorz and R. Lata\l a,
\emph{On the boundedness of Bernoulli processes},
Annals of Mathematics \textbf{180} (2014), no.~3, 1167--1203.
\href{https://doi.org/10.4007/annals.2014.180.3.8}{doi:10.4007/annals.2014.180.3.8}.

\bibitem{BMM}
W. Bednorz, R. Martynek and R. Meller,
\emph{Sudakov minoration for weighted Orlicz balls},
2026. Supplied manuscript \texttt{Weighted\_Orlicz\_Sudakov\_Proof-2.tex};
see also \href{https://arxiv.org/abs/2609.19908}{arXiv:2609.19908}.
The present text is an expanded expository version of that argument.

\bibitem{Borell}
C. Borell, \emph{Convex measures on locally convex spaces},
Arkiv f\"or Matematik \textbf{12} (1974), 239--252.
\href{https://doi.org/10.1007/BF02384761}{doi:10.1007/BF02384761}.

\bibitem{KP}
Z. Kabluchko and J. Prochno,
\emph{The maximum entropy principle and volumetric properties of Orlicz balls},
arXiv version, 2020.
\href{https://arxiv.org/abs/2007.05247v2}{arXiv:2007.05247v2}.

\bibitem{Latala97}
R. Lata\l a, \emph{Sudakov minoration principle and supremum of some processes},
Geometric and Functional Analysis \textbf{7} (1997), 936--953.
\href{https://doi.org/10.1007/s000390050031}{doi:10.1007/s000390050031}.

\bibitem{Latala14}
R. Lata\l a, \emph{Sudakov-type minoration for log-concave vectors},
Studia Mathematica \textbf{223} (2014), 251--274.
\href{https://arxiv.org/abs/1311.6428v2}{arXiv:1311.6428v2}.

\bibitem{LT}
R. Lata\l a and T. Tkocz,
\emph{A note on suprema of canonical processes based on random variables
with regular moments}, Electronic Journal of Probability \textbf{20}
(2015), no.~36, 1--17.
\href{https://doi.org/10.1214/EJP.v20-3625}{doi:10.1214/EJP.v20-3625}.
For the theorem numbering used here, see
\href{https://arxiv.org/abs/1406.6584v1}{arXiv:1406.6584v1}.

\bibitem{PW}
M. Pilipczuk and J. O. Wojtaszczyk,
\emph{The negative association property for the absolute values of random
variables equidistributed on a generalized Orlicz ball},
Positivity \textbf{12} (2008), 421--474.
\href{https://doi.org/10.1007/s11117-007-2116-4}{doi:10.1007/s11117-007-2116-4}.
\href{https://arxiv.org/abs/0803.0434}{arXiv:0803.0434}.

\bibitem{Prekopa}
A. Pr\'ekopa, \emph{On logarithmic concave measures and functions},
Acta Scientiarum Mathematicarum (Szeged) \textbf{34} (1973), 335--343.
\href{https://rutcor.rutgers.edu/~prekopa/SCIENT2.pdf}{Author-hosted full text}.

\bibitem{Sudakov69}
V. N. Sudakov, \emph{Gauss and Cauchy measures and $\varepsilon$-entropy},
Doklady Akademii Nauk SSSR \textbf{185} (1969), no.~1, 51--53 (Russian).
English translation: Soviet Mathematics Doklady \textbf{10} (1969), 310--313.

\bibitem{Sudakov}
V. N. Sudakov, \emph{Gaussian random processes and measures of solid angles
in Hilbert space}, Doklady Akademii Nauk SSSR \textbf{197} (1971), no.~1,
43--45 (Russian).
\href{https://www.mathnet.ru/eng/dan36028}{Journal record and full text}.

\bibitem{Talagrand93}
M. Talagrand, \emph{Regularity of infinitely divisible processes},
Annals of Probability \textbf{21} (1993), no.~1, 362--432.
\href{https://doi.org/10.1214/aop/1176989409}{doi:10.1214/aop/1176989409}.

\end{thebibliography}
\end{document}